\newtheorem{theorem}{Theorem}[section]
\newtheorem{corollary}[theorem]{Corollary}
\newtheorem{lemma}[theorem]{Lemma}
\newtheorem{claim}{Claim}[section]
\newenvironment{proof}{\noindent {\bf Proof.}}{\rule{3mm}{3mm}\par\medskip}
\begin{document}
\title{{Wiener index of  unicycle graphs with given number of even degree vertices}
\author{ Peter Luo\thanks{
Morgantown High School,
Morgantown, WV 26501, email: pnluo2@stu.k12.wv.us },
~Cun-Quan Zhang\thanks{
Department of Mathematics,
West Virginia University,
Morgantown, WV 26506, email: \newline cqzhang@mail.wvu.edu},
~Xiao-Dong Zhang\thanks{
School of Mathematical Sciences,  MOE-LSC, SHL-MAC,
Shanghai Jiao Tong University,
Shanghai, P.R China, email: xiaodong@sjtu.edu.cn~~
This author was partially supported by the National Natural Science Foundation of China (Nos. 11971311 and 11531001) and  the Montenegrin-Chinese
Science and Technology Cooperation Project (No.3-12)}  }
 }        
\date{}          
\maketitle
\begin{abstract}
The Wiener index of a connected graph is the sum of the distance of all pairs of distinct vertices. It was introduced by Wiener in 1947 to analyze some aspects of branching by fitting experimental data for several properties of alkane compounds. Denote by $\mathcal{U}_{n,r}$ the set of unicyclic  graphs with $n$ vertices and $r$ vertices of even degree.  In this paper we present a structural result  on the graphs in $\mathcal{U}_{n,r}$ with minimum Wiener index and completely characterize such graphs when $ r\leq \frac{n+3}{2}$.
\end{abstract}
\section{Introduction}
Molecular descriptors can be used to describe the molecular topology of  chemical compounds.
 Topological indices of molecular graphs play a key role in the analysis of various intrinsic chemical properties, such as boiling point and specific heat.
   Wiener \cite{wiener1947}  first introduced the Wiener index when he wanted to analyze branching in organic compounds.

Let $G$ be a graph and $u \in V(G)$. Denote by $d(u,G) = \sum_{v\in V(G)} d(u,v)$ the sum of the distance  from $u$ to all other vertices  in $V(G)$. If $G$ is disconnected, denote $d(u,G) = \infty$.
 The Wiener index  $W(G)$ is defined as
   $$W(G) = \frac{1}{2} \sum_{u\in V(G)}d(u,G)  = \frac{1}{2} \sum_{u\in V(G)} \sum_{v\in V(G)}d(u,v).$$

   The Wiener index is one of the most studied topological indices in mathematical
chemistry and it is still a very active research topic (see \cite{ca2019,cavaleri2019, das2016, furtula2013-2, Georgakopoulos, gutman1993, gutman2014, gutman2016, gutman2017}). One of the fundamental problems pertaining to the Wiener index is finding extremal graphs in a graph family that achieve maximum and/or minimum values of the index in the family.  Readers are  referred to  the excellent surveys \cite{dobrynin2001, Knor2014, Knor2016,  li2006, Survey2014M}   for the developments and open problems in this area.

A graph is unicyclic (or monocyclic) if it is connected and contains
exactly one cycle. There is a rich literature on extremal problems regarding
Wiener index on unicyclic graphs (for example, see \cite{u-maximum, u2012, u-diameter, u-girth}). Section 4 of  \cite{Survey2014M}  is devoted to unicyclic graphs.

    Lin \cite{lin2014} studied the extreme Wiener index of trees with given number of even degree vertices. In this paper, we study extreme Wiener index of unicyclic graphs with a given number of even degree vertices. We give a structural result of unicyclic graphs with $n$ vertices and $r$  even degree vertices  which has the minimum Wiener index in this  family and give a complete characterization of such graphs for $r \leq \frac{n+3}{2}$.

Denote  by $\mathcal{U}_{n,r}$ the set of unicyclic  graphs with $n$ vertices and $r$ vertices of even degrees. Note that if $G \in \mathcal{U}_{n,r}$, then  $n-r$ is the number of odd degree vertices  and thus $n-r$ is even.

Let $G$ be a unicyclic graph  with the cycle $C=v_1 \dots v_\ell v_1$. Then each component $T_{v_i}$ of $G-E(C)$ containing the vertex $v_i$ is a tree rooted at $v_i$. We denote $G$ by  $H(T_1, T_2, \dots, T_\ell)$  where  $T_{v_i}$  is the tree of  $G-E(C)$ rooted at $v_i$.

A {\it subdivided star} with root $u$, denoted by $S(u)$, is a tree obtained from a star by inserting some degree $2$ vertices into each edge in the star.

Denote a {\it balanced subdivided star} with  center $v$ by $SB_v(t;b)$
 if it has $b$ branches and each branch is of length exactly $t$.  Denote an {\it almost balanced subdivided star} with center $v$ by $SaB_v(t;b)$
 if it has $b$ branches, each branch is  of length either $t$ or $t-1$,  and at least one branch is of length $t$ (See Figure \ref{star}). If the center is irrelevant or can be understood from the context, we may simply write  $SB(t;b)$ or   $SaB(t;b)$  for $SB_v(t;b)$ or   $SaB_v(t;b)$ respectively. Here, $SB(1;1)=K_2, ~ SB(0;0)=K_1$, and $SB(1;b)$ is a star.

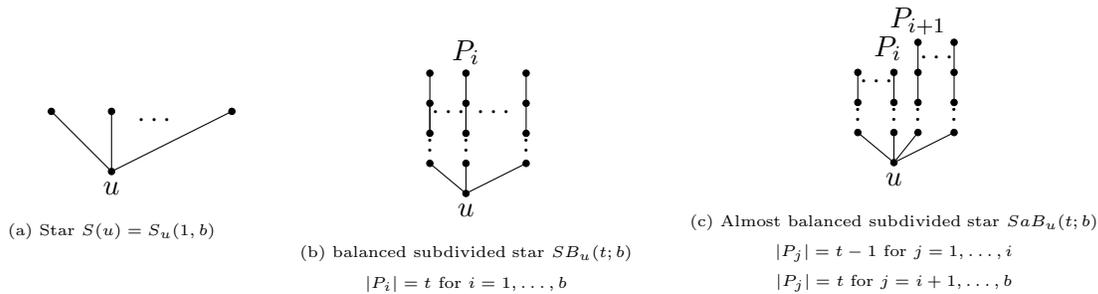
\begin{figure}
\label{Fig1}
\label{star}
\begin{center}
\begin{tikzpicture}[scale=0.8]
\draw[fill] (1,0) circle (1.5pt) node[anchor=north] {$u$};


\draw[fill] (0,1) circle (1.5pt) node[anchor=east] {};
\draw[fill] (1,1) circle (1.5pt) node[anchor=west] {};
\node at (1.75,0.85) {$\cdots$};
\draw[fill] (3,1) circle (1.5pt) node[anchor=west] {};
\draw[]  (1,0)-- (0,1) (1,0) -- (1,1)  (1,0) -- (3,1);




\node at (1,-1) {\tiny (a) Star $S(u)= S_u(1,b)$};
\node at (1,-1.5) {\mbox{}};
\node at (1,-2) {\mbox{}};
\end{tikzpicture}
~~
\begin{tikzpicture}[scale=0.8]
\draw[fill] (1,0) circle (1.5pt) node[anchor=north] {$u$};

\draw[fill] (0.4,0.5) circle (1.5pt) node[anchor=east] {};
\draw[fill] (1,0.5) circle (1.5pt) node[anchor=west] {};
\draw[fill] (2,0.5) circle (1.5pt) node[anchor=west] {};
\draw[] (0.4,0.5) -- (1,0) -- (1,0.5)  (1,0) -- (2,0.5);

\draw[fill] (0.4,1) circle (1.5pt) node[anchor=east] {};
\draw[fill] (1,1) circle (1.5pt) node[anchor=west] {};
\draw[fill] (2,1) circle (1.5pt) node[anchor=west] {};
\node at (0.4,0.85) {$\vdots$};
\node at (1,0.85) {$\vdots$};
\node at (2,0.85) {$\vdots$};

\draw[fill] (0.4,1.5) circle (1.5pt) node[anchor=east] {};
\draw[fill] (0.4,2) circle (1.5pt) node[anchor=east] {};

\draw[fill] (1,1.5) circle (1.5pt) node[anchor=west] {};
\draw[fill] (1,2) circle (1.5pt) node[anchor=west] {};
\node at (1,2.35) {$P_i$};

\draw[fill] (2,1.5) circle (1.5pt) node[anchor=west] {};
\draw[] (0.4,1.5) -- (0.4,1) (1,1.5) -- (1,1) (2,1.5) -- (2,1);

\draw[fill] (2,2) circle (1.5pt) node[anchor=west] {};
\draw[] (0.4,2) -- (0.4,1) (1,2) -- (1,1)  (2,1.5) -- (2,2);

\node at (1.5,1.35) {$\cdots$};
\node at (0.75,1.35) {$\cdots$};
\node at (1,-1) {\tiny (b) balanced subdivided star $SB_u(t;b)$};
\node at (1,-1.5) {\tiny $|P_i| = t$ for $i =1,\dots,b$};
\node at (1,-1.5) {\mbox{}};
\end{tikzpicture}
~~
\begin{tikzpicture}[scale=0.8]
\draw[fill] (1,0) circle (1.5pt) node[anchor=north] {$u$};

\draw[fill] (0.4,0.5) circle (1.5pt) node[anchor=east] {};
\draw[fill] (1,0.5) circle (1.5pt) node[anchor=west] {};
\draw[fill] (1.4,0.5) circle (1.5pt) node[anchor=west] {};
\draw[fill] (2,0.5) circle (1.5pt) node[anchor=west] {};
\draw[] (0.4,0.5) -- (1,0) -- (1,0.5) (1.4,0.5) -- (1,0) -- (2,0.5);

\draw[fill] (0.4,1) circle (1.5pt) node[anchor=east] {};
\draw[fill] (1,1) circle (1.5pt) node[anchor=west] {};
\draw[fill] (1.4,1) circle (1.5pt) node[anchor=west] {};
\draw[fill] (2,1) circle (1.5pt) node[anchor=west] {};
\node at (0.4,0.85) {$\vdots$};
\node at (1,0.85) {$\vdots$};
\node at (1.4,0.85) {$\vdots$};
\node at (2,0.85) {$\vdots$};

\draw[fill] (0.4,1.5) circle (1.5pt) node[anchor=east] {};
\draw[fill] (1,1.5) circle (1.5pt) node[anchor=west] {};
\draw[fill] (1.4,1.5) circle (1.5pt) node[anchor=west] {};
\draw[fill] (2,1.5) circle (1.5pt) node[anchor=west] {};
\draw[] (0.4,1.5) -- (0.4,1) (1,1.5) -- (1,1) (1.4,1.5) -- (1.4,1) (2,1.5) -- (2,1);

\draw[fill] (1.4,2) circle (1.5pt) node[anchor=west] {};
\draw[fill] (2,2) circle (1.5pt) node[anchor=west] {};
\draw[] (1.4,1.5) -- (1.4,2) (2,1.5) -- (2,2);

\node at (0.9, 1.9) {$P_i$};
\node at (1.4, 2.35) {$P_{i+1}$};
\node at (1.75,1.75) {$\cdots$};
\node at (0.75,1.35) {$\cdots$};
\node at (1,-1) {\tiny(c) Almost balanced subdivided star $SaB_u(t;b)$};
\node at (1,-1.5) {\tiny$|P_j| = t-1$  for $j=1,\dots,i$};
\node at (1,-2) {\tiny $|P_j| = t$  for $j=i+1,\dots,b$};
\end{tikzpicture}
\end{center}
\caption{\small\it  Subdivided stars}
\end{figure}

We first present a structure of graphs in ${\cal U}_{n,r}$ with minimum Wiener index.

\begin{theorem}
\label{TH: Conf}
 Let $   r\geq 0$ and $ G\in {\cal U}_{n,r}$ such that $W(G)$ is minimum among all graphs in ${\cal U}_{n,r}$. We have the following:

   (1)   if $r\leq 2$ and $G$ must be  $H(SB(1; b_1), X_1, X_2)$ where $X_j \in \{ K_1, K_2 \}$ for each $j = 1,2$, $b_1$ is odd. See Figure~2-(b-d).

    (2) if $r\geq 3$, then  $G$   must be $H(K_2,K_1,K_1,K_1,K_1)$ or   $H(SaB(t;b_2),  K_1, \dots, K_1)  $ where  $b_2 \geq 0$ is even and $t \geq 0$. See Figure~2(a) and (e).
 \end{theorem}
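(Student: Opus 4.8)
The plan is to start from an arbitrary minimizer $G\in\mathcal{U}_{n,r}$ and reduce it, through a chain of transformations that each preserve the number $r$ of even-degree vertices and never increase $W$, until only the listed graphs can survive. The single tool I would use throughout is the pendant-subtree relocation estimate: if $S$ is a subtree joined to the rest of $G$ by one edge at a vertex $w$, and we detach $S$ and reattach it by that same edge at a vertex $w'$, then $W$ changes by exactly $|V(S)|\,(D_{w'}-D_w)$, where $D_x=\sum_{z\notin V(S)}d(x,z)$ is the transmission of $x$ in $G-V(S)$. Hence relocating a pendant subtree to a strictly more central vertex strictly lowers $W$, and I will repeatedly take the target to be a cycle vertex, which is more central than a vertex buried deep in a pendant tree because the whole cycle and the other trees hang off it.

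First I would reduce to each $T_i$ being a subdivided star rooted at $v_i$: if some $T_i$ has a branch vertex other than $v_i$, an improving relocation of one of its pendant paths exists (the target being whichever of the two relevant vertices has smaller transmission, where the at least three cycle vertices weigh on the central side), and iterating eliminates all interior branching. Balancing comes next and is cleanest as a relocation of a single vertex: moving the deepest leaf of a longest branch to the end of a shortest branch strictly lowers its transmission, hence $W$, and is parity-neutral (a degree-$2$ vertex becomes a leaf while a leaf becomes a degree-$2$ vertex); repeating forces all branch lengths within one of each other, i.e.\ the form $SB$ or $SaB$. A consolidation step then gathers all branches onto a single most-central cycle vertex $v_1$, leaving every other $T_i$ equal to $K_1$ or $K_2$; since the vertex holding the bulk of the mass only becomes more central, these relocations are self-reinforcing and $W$-decreasing. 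What remains is governed by the even-degree budget: every empty cycle vertex and every interior branch vertex has degree $2$ and is even, while the branch endpoints are the only leaves.

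The parity count then pins the parameters. For $r\le 2$ a consolidated graph cannot tolerate more than two even vertices, forcing the cycle down to a triangle carrying a single star whose branches all have length $1$ (so $SB(1;b_1)$) with $v_2,v_3$ each $K_1$ or $K_2$; writing $k$ for the number of $K_2$'s gives $n=3+b_1+k$ and $r=2-k$, so $b_1=n-5+r$, which is odd because $n-r$ is even. For $r\ge 3$ the required even vertices are supplied cheaply by a long cycle of empty vertices together with a subdivided star; when the center is even the odd vertices are exactly the $b_2$ leaves, so $b_2=n-r$ is even and the form is $H(SaB(t;b_2),K_1,\dots,K_1)$. The lone exception is the boundary value $n-r=2$, where the two odd vertices can instead be an odd-degree center and one leaf, i.e.\ a single pendant on a long cycle; a direct comparison (for $n=6,\ r=4$ one gets $W=26$ for the $5$-cycle with a pendant against $W=27$ for the triangle competitor $H(SaB(2;2),K_1,K_1)$) shows the sporadic $H(K_2,K_1,K_1,K_1,K_1)$ is genuinely optimal, so it must be listed on its own.

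The main obstacle is holding $r$ fixed. Every relocation flips the parities of both endpoints $w,w'$, so a single move preserves the even-degree count only when $w$ and $w'$ have opposite parities; in general one must relocate branches in pairs (changing each endpoint's degree by $2$) or combine two single moves so the parity flips cancel, and one must verify that a $W$-decreasing parity-preserving move always exists unless $G$ is already final. The balancing move is parity-neutral on its own, but the subdivided-star and consolidation reductions need this pairing argument. The deeper difficulty is genuinely global rather than local: a degree-$2$ vertex on the cycle and a degree-$2$ vertex inside a branch contribute equally to $r$ but differently to $W$, so the split of the even-degree budget between cycle length and branch length cannot be settled by local moves, and the emergence of the sporadic $5$-cycle shows the optimal cycle is not always the shortest one. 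Pinning that split, and isolating the one exceptional graph, is the technical heart of the argument.
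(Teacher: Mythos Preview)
Your outline correctly isolates the two hard issues (parity preservation and the global cycle--branch trade-off) and your balancing step matches the paper's Claim~3.6 essentially verbatim. But the plan has a structural gap: your single tool, pendant-subtree relocation, \emph{never changes the length of the cycle}, and the parity count you invoke for $r\le 2$ does not by itself force $g=3$. Concretely, take $G=H(SB(1;3),K_2,K_2,K_1,K_1)$ on a $5$-cycle: this graph has $n=10$, $r=2$, is already ``consolidated'' in your sense (one star at $v_1$, the other $T_i$'s all $K_1$ or $K_2$), and yet is not a triangle and is not the sporadic $H(K_2,K_1,K_1,K_1,K_1)$. No parity-preserving pendant relocation is available here: moving a single leaf off a degree-$3$ cycle vertex flips two same-parity degrees and changes $r$, and moving leaves in pairs changes $r$ by $\pm 2$. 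So your reduction chain stalls on a graph that is not of the claimed form. The paper gets past this with a completely different operation (its Lemma~2.6): contract a cycle edge $uv$ with $d(u)$ maximal and append a new leaf at $u$. This shortens the cycle by one, is parity-preserving by a short case check on $d(v)\in\{2,3\}$, and is shown to strictly lower $W$ via a careful distance bookkeeping that produces a saving of $\tfrac{k(k-1)}{2}$ on the cycle part. That lemma, not any relocation, is what drives the odd-max-degree case down to the triangle (with the $C_5$ exception emerging from the boundary of its inequalities).

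There is a second, smaller gap in the consolidation and subdivided-star steps. You write that ``one must verify that a $W$-decreasing parity-preserving move always exists,'' but give no mechanism for this verification, and a one-directional relocation to the more central vertex can fail to preserve $r$ exactly when both endpoints have the same parity. The paper sidesteps this with the two-cut-vertex lemma (Lemma~2.1): choose blocks $X,Y$ with $d_X(u)=d_Y(v)=2$, so that \emph{both} $G_{X\to Y}$ and $G_{Y\to X}$ lie in $\mathcal{U}_{n,r}$, and then the lemma guarantees that at least one of them strictly lowers $W$. This bidirectional trick is what makes Claims~3.2--3.4 go through without ever having to decide which direction is the improving one; your single-direction estimate $\Delta W=|V(S)|(D_{w'}-D_w)$ does not provide that safety net.
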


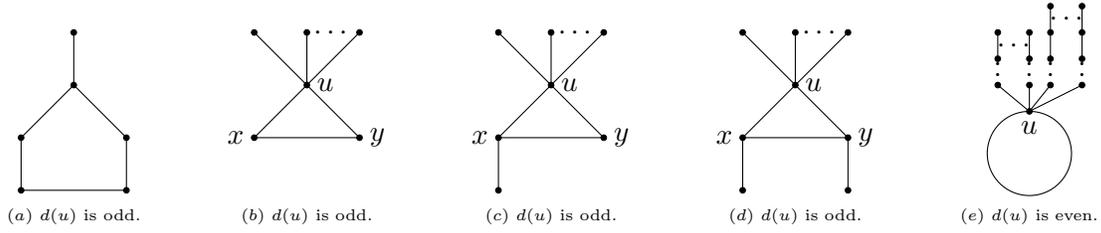
\begin{figure}
\label{Fig2}
\begin{center}
\begin{tikzpicture}[scale=0.7]
\draw[fill] (0,0) circle (1.5pt) node[anchor=east] {};
\draw[fill] (1,1) circle (1.5pt) node[anchor=west] {};
\draw[fill] (2,0) circle (1.5pt) node[anchor=west] {};
\draw[fill] (0,-1) circle (1.5pt) node[anchor=east] {};
\draw[fill] (2,-1) circle (1.5pt) node[anchor=west] {};
\draw[fill] (1,2) circle (1.5pt) node[anchor=west] {};
\draw[] (0,0) -- (1,1) -- (2,0) -- (2,-1)--(0,-1)--(0,0) (1,1)--(1,2);
\node at (1,-1.5) {\tiny $(a)~d(u)$ is odd.};
\end{tikzpicture}
\qquad
\begin{tikzpicture}[scale=0.7]
\draw[fill] (0,0) circle (1.5pt) node[anchor=east] {$x$};
\draw[fill] (1,1) circle (1.5pt) node[anchor=west] {$u$};
\draw[fill] (2,0) circle (1.5pt) node[anchor=west] {$y$};
\draw[] (0,0) -- (1,1) -- (2,0) -- (0,0);
\draw[fill] (0,2) circle (1.5pt) node[anchor=east] {};
\draw[fill] (1,2) circle (1.5pt) node[anchor=west] {};
\draw[fill] (2,2) circle (1.5pt) node[anchor=west] {};
\draw[] (0,2) -- (1,1) -- (1,2) (1,1) -- (2,2);
\node at (1.5,2) {$\cdots$};
\node at (1,-1.5) {\tiny $(b)~d(u)$ is odd.};
\end{tikzpicture}
\qquad
\begin{tikzpicture}[scale=0.7]
\draw[fill] (0,0) circle (1.5pt) node[anchor=east] {$x$};
\draw[fill] (1,1) circle (1.5pt) node[anchor=west] {$u$};
\draw[fill] (2,0) circle (1.5pt) node[anchor=west] {$y$};
\draw[fill] (0,-1) circle (1.5pt) node[anchor=west] {};
\draw[] (0,0) -- (1,1) -- (2,0) -- (0,0) (0,0) -- (0,-1);
\draw[fill] (0,2) circle (1.5pt) node[anchor=east] {};
\draw[fill] (1,2) circle (1.5pt) node[anchor=west] {};
\draw[fill] (2,2) circle (1.5pt) node[anchor=west] {};
\draw[] (0,2) -- (1,1) -- (1,2) (1,1) -- (2,2);
\node at (1.5,2) {$\cdots$};
\node at (1,-1.5) {\tiny $(c) ~d(u)$ is odd.};
\end{tikzpicture}
\qquad
\begin{tikzpicture}[scale=0.7]
\draw[fill] (0,0) circle (1.5pt) node[anchor=east] {$x$};
\draw[fill] (1,1) circle (1.5pt) node[anchor=west] {$u$};
\draw[fill] (2,0) circle (1.5pt) node[anchor=west] {$y$};
\draw[fill] (0,-1) circle (1.5pt) node[anchor=west] {};
\draw[fill] (2,-1) circle (1.5pt) node[anchor=west] {};
\draw[] (0,0) -- (1,1) -- (2,0) -- (0,0) (0,0) -- (0,-1) (2,0) -- (2,-1);
\draw[fill] (0,2) circle (1.5pt) node[anchor=east] {};
\draw[fill] (1,2) circle (1.5pt) node[anchor=west] {};
\draw[fill] (2,2) circle (1.5pt) node[anchor=west] {};
\draw[] (0,2) -- (1,1) -- (1,2) (1,1) -- (2,2);
\node at (1.5,2) {$\cdots$};
\node at (1,-1.5) {\tiny $(d) ~d(u)$ is odd.};
\end{tikzpicture}
\qquad
\begin{tikzpicture}[scale=0.7]
\draw[] (1,-0.3) circle [radius=0.8];
\draw[fill] (1,0.5) circle (1.5pt) node[anchor=north] {$u$};

\draw[fill] (0.4,1) circle (1.5pt) node[anchor=east] {};
\draw[fill] (1,1) circle (1.5pt) node[anchor=west] {};
\draw[fill] (1.4,1) circle (1.5pt) node[anchor=west] {};
\draw[fill] (2,1) circle (1.5pt) node[anchor=west] {};
\draw[] (0.4,1) -- (1,0.5) -- (1,1) (1.4,1) -- (1,0.5) -- (2,1);

\draw[fill] (0.4,1.5) circle (1.5pt) node[anchor=east] {};
\draw[fill] (1,1.5) circle (1.5pt) node[anchor=west] {};
\draw[fill] (1.4,1.5) circle (1.5pt) node[anchor=west] {};
\draw[fill] (2,1.5) circle (1.5pt) node[anchor=west] {};
\node at (0.4,1.35) {$\vdots$};
\node at (1,1.35) {$\vdots$};
\node at (1.4,1.35) {$\vdots$};
\node at (2,1.35) {$\vdots$};

\draw[fill] (0.4,2) circle (1.5pt) node[anchor=east] {};
\draw[fill] (1,2) circle (1.5pt) node[anchor=west] {};
\draw[fill] (1.4,2) circle (1.5pt) node[anchor=west] {};
\draw[fill] (2,2) circle (1.5pt) node[anchor=west] {};
\draw[] (0.4,2) -- (0.4,1.5) (1,2) -- (1,1.5) (1.4,2) -- (1.4,1.5) (2,2) -- (2,1.5);

\draw[fill] (1.4,2.5) circle (1.5pt) node[anchor=west] {};
\draw[fill] (2,2.5) circle (1.5pt) node[anchor=west] {};
\draw[] (1.4,2) -- (1.4,2.5) (2,2) -- (2,2.5);

\node at (1.75,2.25) {$\cdots$};
\node at (0.75,1.75) {$\cdots$};
\node at (1,-1.5) {\tiny $(e)~d(u)$ is even.};
\end{tikzpicture}
\end{center}
\caption{\small\it  Configurations for Theorem~\ref{TH: Conf}}
\end{figure}

As a corollary, we  can further completely characterize $G \in \mathcal{U}_{n,r}$ such that $W(G)$ is the minimum when $0\leq r \leq \frac{n+3}{2}$.

\begin{theorem}
\label{TH: small r}
Let $0\leq r \leq \frac{n+3}{2}$.
Then $W(G)$ is minimum among all graphs in ${\cal U}_{n,r}$ if and only if $G$ is one of the following:

(a)  $H(K_2,K_1,K_1,K_1,K_1)$, or  $H(SB(1; b_1), X_1, X_2)$ where $b_1\geq 1$ is odd  and $X_j \in \{ K_1, K_2 \}$ for each $j$. See Figure ~2-(a-d).

(b) $H(SB(1; b_2), K_1, K_1), ~H(SB(1;b_3), K_1, K_1, K_1 ), ~H(SaB(2;b_4),K_1, K_1, K_1, K_1)$,
 where  $b_i$ is even for each $i=2,3,4$. See Figure ~3-(a-c).
\end{theorem}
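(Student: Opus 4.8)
The plan is to use Theorem~\ref{TH: Conf} as a black box that already reduces the set of minimizers to a short list of shapes, and then to settle the remaining one-parameter optimization by an explicit Wiener-index comparison. Since $n-r$ is even, every $G\in\mathcal{U}_{n,r}$ has exactly $n-r$ vertices of odd degree, and in each configuration of Theorem~\ref{TH: Conf} these are precisely the leaves of the attached subdivided star. Hence the number of branches is forced to be $b:=n-r$, which is even, and for $r\ge 3$ a candidate minimizer $H(SaB(t;b),K_1,\dots,K_1)$ is pinned down up to a single free parameter, the cycle length $\ell$: the $n-\ell$ branch vertices must be distributed as evenly as possible among the $b$ branches, which fixes $t=\lceil (n-\ell)/b\rceil$ together with the almost-balanced shape. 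The admissible range is $3\le\ell\le r$ (each branch has length at least one), and the hypothesis $r\le\frac{n+3}{2}$ is equivalent to $b\ge\frac{n-3}{2}$; a short estimate then shows that for every $\ell\in\{3,4,5\}$ the branch lengths are at most $2$, i.e. $t\le 2$. This is exactly why only $SB(1;\cdot)$ and $SaB(2;\cdot)$ occur in the statement, and it is the place where the hypothesis on $r$ is used.

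First I would obtain a closed form for $W\big(H(SaB(t;b),K_1,\dots,K_1)\big)$ by splitting the distance sum into three parts: among the $\ell$ cycle vertices, between branch vertices and cycle vertices, and among branch vertices. Since every branch hangs off a single cycle vertex $u$, each branch-to-cycle distance is $\mathrm{dep}(x)+d_C(u,v_i)$, so writing $D_\ell:=\sum_i d_C(u,v_i)$ the cross term factorizes and
\begin{equation*}
W=W(C_\ell)+\ell\sum_{x}\mathrm{dep}(x)+(n-\ell)D_\ell+\big(\text{branch-branch terms}\big),
\end{equation*}
where the last summand depends only on the multiset of branch lengths. For fixed total $n-\ell$ the almost-balanced distribution simultaneously minimizes $\sum_x\mathrm{dep}(x)$ and the branch-branch terms, which re-derives the shape supplied by Theorem~\ref{TH: Conf}; what remains is to minimize the resulting explicit integer function $f(\ell)$ over $3\le\ell\le r$.

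The core step is this discrete minimization. Using $W(C_3),W(C_4),W(C_5),W(C_6)=3,8,15,27$ and $D_3,D_4,D_5,D_6=2,4,6,9$, I would compute the forward difference $f(\ell+1)-f(\ell)$: lengthening the cycle by one both removes the deepest vertex of one branch (a saving in the depth and branch-branch terms) and enlarges the cycle (a loss in $W(C_\ell)+(n-\ell)D_\ell$, where the growth of $(n-\ell)D_\ell$ is the dominant effect). I expect to prove that $f$ is unimodal with its unique minimum at $\ell=\min(r,5)$: for $r=3$ and $r=4$ this is the triangle- and square-with-pendant-star, while for $r\ge 5$ it is the $5$-cycle carrying a pendant subdivided star with branches of length at most $2$---length-one branches exactly when $r=5$, and a genuine $SaB(2;b)$ when $r\ge 6$. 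The cases $r\le 2$ are immediate from part~(1) of Theorem~\ref{TH: Conf}, where the shape is rigid once $n$ is fixed. A single sporadic coincidence must be checked by hand: at $(n,r)=(6,4)$ the square-plus-star and $H(K_2,K_1,K_1,K_1,K_1)$ both give $W=26$, which is why the latter graph is listed separately in~(a). Finally the ``if'' direction is automatic, since for each admissible $(n,r)$ the minimization selects exactly one graph, up to isomorphism, from the listed families.

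The step I expect to be the main obstacle is the comparison across cycle lengths. Because $W(C_5)\to W(C_6)$ jumps from $15$ to $27$, I must show that the branch-shortening savings can never offset this once $\ell\ge 5$, whereas for $\ell\le 4$ they always do; the delicate point is that the coefficient $\ell$ multiplying $\sum_x\mathrm{dep}(x)$ also grows, so the two effects must be balanced quantitatively rather than by a soft monotonicity argument. The floor and ceiling in the balancing make $f$ piecewise, forcing a case split on the residue of $n-\ell$ modulo $b$, and it is here that one verifies the sign change of $f(\ell+1)-f(\ell)$ occurs exactly at $\ell=5$ rather than at $4$ or $6$. Pinning down that threshold, under the standing hypothesis $r\le\frac{n+3}{2}$, is the part I would write out in full detail.
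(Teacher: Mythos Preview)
Your approach is essentially the paper's: it too invokes Theorem~\ref{TH: Conf}, derives $t\le 2$ from $r\le(n+3)/2$ (its Lemma~4.1), and then compares consecutive cycle lengths via what it calls Operation~A---contract a cycle edge at $u$ and extend a shortest branch by one---computing $W(G)-W(G_A)$ through the cut-vertex formula of Lemma~\ref{LE:one cut vertex}; this is exactly your forward difference $f(\ell)-f(\ell-1)$, and the sign pattern it obtains (negative for $g\in\{4,5\}$, positive for $g\ge 6$, with a sporadic equality at $g=7$ that disappears after a second application) is precisely the threshold you anticipate. The paper also notes the $(n,r)=(6,4)$ tie with $H(K_2,K_1,K_1,K_1,K_1)$; your value $W=26$ there is correct.
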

\begin{figure}
\label{Fig3}
\begin{center}
\begin{tikzpicture}[scale=0.8]
\draw[fill] (0,0) circle (1.5pt) node[anchor=east] {$x$};
\draw[fill] (1,1) circle (1.5pt) node[anchor=west] {$u$};
\draw[fill] (2,0) circle (1.5pt) node[anchor=west] {$y$};
\draw[] (0,0) -- (1,1) -- (2,0) -- (0,0);
\draw[fill] (0,2) circle (1.5pt) node[anchor=east] {};
\draw[fill] (1,2) circle (1.5pt) node[anchor=west] {};


\draw[fill] (0.5,2) circle (1.5pt) node[anchor=west] {};
\draw[fill] (2,2) circle (1.5pt) node[anchor=west] {};
\draw[] (0,2) -- (1,1) -- (1,2) (1,1) -- (2,2) (0.5,2)--(1,1);
\node at (1.5,2) {$\cdots$};
\node at (1,-1.5) {$(a)$};
\end{tikzpicture}
\qquad
\begin{tikzpicture}[scale=0.8]
\draw[fill] (0,0) circle (1.5pt) node[anchor=east] {$x$};
\draw[fill] (1,1) circle (1.5pt) node[anchor=west] {$u$};
\draw[fill] (2,0) circle (1.5pt) node[anchor=west] {$y$};
\draw[fill] (1,-1) circle (1.5pt) node[anchor=west] {$v$};
\draw[] (0,0) -- (1,1) -- (2,0) -- (1,-1)--(0,0);
\draw[fill] (0,2) circle (1.5pt) node[anchor=east] {};
\draw[fill] (1,2) circle (1.5pt) node[anchor=west] {};

\draw[fill] (0.5,2) circle (1.5pt) node[anchor=west] {};
\draw[fill] (2,2) circle (1.5pt) node[anchor=west] {};
\draw[] (0,2) -- (1,1) -- (1,2) (1,1) -- (2,2) (0.5,2)--(1,1);
\node at (1.5,2) {$\cdots$};
\node at (1,-1.5) {$(b)$};
\end{tikzpicture}
\qquad
\begin{tikzpicture}[scale=0.8]
\draw[fill] (0,0) circle (1.5pt) node[anchor=east] {$x$};
\draw[fill] (1,1) circle (1.5pt) node[anchor=west] {$u$};
\draw[fill] (2,0) circle (1.5pt) node[anchor=west] {$y$};
\draw[fill] (0,-1) circle (1.5pt) node[anchor=east] {$v$};
\draw[fill] (2,-1) circle (1.5pt) node[anchor=west] {$w$};
\draw[] (0,0) -- (1,1) -- (2,0) -- (2,-1)--(0,-1)--(0,0);

\draw[fill] (-0.5,1.5) circle (1.5pt) node[anchor=west] {};
\draw[fill] (0.5,1.5) circle (1.5pt) node[anchor=east] {};
\draw[fill] (1,1.5) circle (1.5pt) node[anchor=west] {};
\draw[fill] (2,1.5) circle (1.5pt) node[anchor=west] {};
\draw[] (-0.5,1.5) -- (1,1) -- (0.5,1.5) (1,1) -- (2,1.5) (0.5,1.5)--(1,1)--(1,1.5);
\node at (0,1.5) {$\cdots$};

\draw[fill] (1,2) circle (1.5pt) node[anchor=west] {};
\draw[fill] (2,2) circle (1.5pt) node[anchor=west] {};
\draw[] (1,1.5) -- (1,2) (2,1.5) -- (2,2);
\node at (1.5,1.75) {$\cdots$};
\node at (1,-1.5) {$(c)$};
\end{tikzpicture}
\end{center}
\caption{\small\it  Configurations for Theorem~\ref{TH: small r}: $d(u) = n-r +2$ even.}
\end{figure}
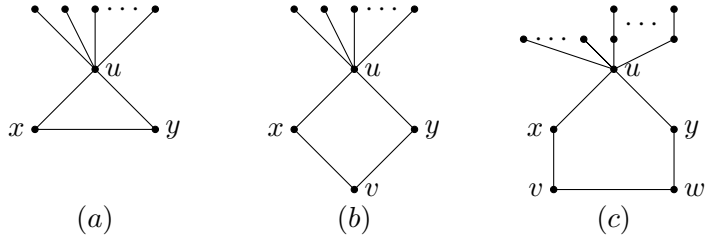

\section{Lemmas}
In this section, we introduce some lemmas that will be needed in the proof of our theorems.

Let $G$ be a graph  which has two distinct cut vertices $u,v$.  Then $G$ consists of three components $X, Y$ and $H$ such that $X$ and $H$ share $u$ and $Y$ and $H$ share $v$.
Denote by $G_{X\to Y}$ to be the graph obtained from $G$ by moving $X$ to $Y$ so that $X,Y$ and $H$ share the vertex $v$. $G_{Y\to X}$ is defined similarly (see Figure~4).

\begin{figure}
\label{Fig4}
\begin{center}
\begin{tikzpicture}[scale=0.8]
\draw[thick, min distance=25mm, in=210, out=150, loop, looseness=10] (-1,0)  to (-1,0)  node [left=6mm] {$X$};
\draw[thick] (0,0) ellipse (10mm and 4mm) node [] {$H$};
\draw[thick, min distance=25mm, in=330, out=30, loop, looseness=10] (1,0)  to (1,0) node [right=6mm] {$Y$};
\draw[fill] (-1,0) circle (2pt) node[anchor=west] {};
\draw[fill] (1,0) circle (2pt) node[anchor=west] {};
\end{tikzpicture}
\begin{tikzpicture}[scale=0.8]
\draw[thick, min distance=25mm, in=210, out=150, loop, looseness=10] (0,0)  to (0,0)  node [left=6mm] {$X$};
\draw[thick, min distance=25mm, in=120, out=60, loop, looseness=10] (0,0)  to (0,0)  node [above=6mm] {$Y$};
\draw[thick, min distance=25mm, in=330, out=30, loop, looseness=10] (0,0)  to (0,0) node [right=6mm] {$H$};
\draw[fill] (0,0) circle (2pt) node[anchor=west] {};
\node at (0,-1) {${G_{Y\to X}}$};
\end{tikzpicture}
\begin{tikzpicture}[scale=0.8]
\draw[thick, min distance=25mm, in=210, out=150, loop, looseness=10] (0,0)  to (0,0)  node [left=6mm] {$H$};
\draw[thick, min distance=25mm, in=120, out=60, loop, looseness=10] (0,0)  to (0,0)  node [above=6mm] {$X$};
\draw[thick, min distance=25mm, in=330, out=30, loop, looseness=10] (0,0)  to (0,0) node [right=6mm] {$Y$};
\draw[fill] (0,0) circle (2pt) node[anchor=west] {};
\node at (0,-1) {${G_{X\to Y}}$};
\end{tikzpicture}
\end{center}
\caption{\small\it  $G_{X\to Y}$ and $G_{Y\to X}$.}
\end{figure}

The following two lemmas are  applied frequently in the  our proofs.

\begin{lemma}(\cite{HLW2011})
\label{LE:two cut vertices}
Let $G$ be a graph  which has two distinct cut vertices $u,v$. Let $X, Y$ and $H$ be the three components such that $X$ and $H$ share $u$ and $Y$ and $H$ share $v$. Then
either $W(G_{X\to Y}) < W(G)$ or $W(G_{Y\to X}) < W(G)$.
\end{lemma}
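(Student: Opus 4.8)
The plan is to track exactly which pairwise distances change under the two moves and to reduce the statement to a short sign argument. Write $V(G) = A \cup V(H) \cup B$, where $A = V(X)\setminus\{u\}$ and $B = V(Y)\setminus\{v\}$ are disjoint from $V(H)$ and from each other, and set $n_X = |A| \geq 1$ and $n_Y = |B| \geq 1$ (both positive, since $u$ and $v$ are genuine cut vertices). Because $u$ and $v$ are cut vertices, every path from a vertex of $A$ to a vertex outside $X$ passes through $u$, and every path leaving $Y$ passes through $v$; hence all the relevant distances split additively. In particular, for $a\in A$, $b\in B$, $h\in V(H)$, we have in $G$ that $d(a,h)=d(a,u)+d(u,h)$, $d(b,h)=d(b,v)+d(v,h)$, and $d(a,b)=d(a,u)+d(u,v)+d(v,b)$. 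Both moves leave all distances internal to $X$, to $Y$, and to $H$ untouched, so only the $A$--$V(H)$, $B$--$V(H)$, and $A$--$B$ contributions can change.

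Next I would record the abbreviations $D_X=\sum_{a\in A}d(a,u)$, $D_Y=\sum_{b\in B}d(b,v)$, $S_u=\sum_{h\in V(H)}d(u,h)$, $S_v=\sum_{h\in V(H)}d(v,h)$, and $\delta=d(u,v)\geq 1$, and compute the changed contributions. In $G_{X\to Y}$ the root of $X$ is identified with $v$, so the $A$--$V(H)$ sum becomes $|V(H)|\,D_X + n_X S_v$ and the $A$--$B$ sum loses its $\delta$ term; in $G_{Y\to X}$ the root of $Y$ is identified with $u$ in the symmetric way. Subtracting, all common terms cancel and only the surviving ones give
$$W(G)-W(G_{X\to Y}) = n_X\bigl(S_u-S_v+n_Y\delta\bigr), \qquad W(G)-W(G_{Y\to X}) = n_Y\bigl(S_v-S_u+n_X\delta\bigr).$$
Call these $\Delta_1$ and $\Delta_2$.

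Finally I would argue by contradiction: if neither move strictly decreases $W$, then $\Delta_1\le 0$ and $\Delta_2\le 0$. Since $n_X,n_Y>0$, these force $S_u-S_v\le -n_Y\delta$ and $S_u-S_v\ge n_X\delta$, hence $n_X\delta\le -n_Y\delta$, i.e.\ $(n_X+n_Y)\delta\le 0$. This is impossible because $n_X+n_Y\ge 2$ and $\delta\ge 1$, so at least one of $\Delta_1,\Delta_2$ is strictly positive, which is exactly the claim. The one point needing care, and which I expect to be the main (if modest) obstacle, is the bookkeeping in the first step: one must verify that $u,v$ being cut vertices really does make every inter-block distance decompose additively with no shortcut, and that no pair is double-counted when splitting $V(G)$ into $A$, $V(H)$, and $B$. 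Once that is pinned down, the algebra and the final sign argument are immediate.
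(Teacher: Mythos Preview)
Your proof is correct. The paper does not supply its own proof of this lemma---it simply cites it from \cite{HLW2011}---so there is nothing to compare against; your distance-splitting computation giving $W(G)-W(G_{X\to Y})=n_X(S_u-S_v+n_Y\delta)$ and its symmetric counterpart, followed by the sign contradiction $(n_X+n_Y)\delta\le 0$, is the standard argument and goes through exactly as you describe.
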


An edge  $e=uv$ is a bridge if its removal increases the number of components  and a bridge is trivial  if one of its ends is a leaf vertex.

 In particular when $H$ is a single edge, $G_{X\to Y} = G_{Y\to X}$. Thus we have the following corollary.

\begin{corollary}
\label{COR:bridge}
Let $e=uv$ be  an nontrivial bridge of $G$ and let $X$ and $Y$ be two components of $G-e$.  Let $H= e$.   Then $G_{X\to Y} = G_{Y\to X}$ and thus  $W(G_{X\to Y}) < W(G)$. In particular, if  one of $d(u)$ and $d(v)$ is odd, $G$ and $G_{X\to Y}$ have the same number of even degree vertices.
\end{corollary}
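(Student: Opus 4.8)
The plan is to read off both assertions from the $H = e$ specialization of Lemma~\ref{LE:two cut vertices}, followed by an elementary degree count. First I would check that Lemma~\ref{LE:two cut vertices} actually applies. Because $e = uv$ is a \emph{nontrivial} bridge, neither $u$ nor $v$ is a leaf, so $u$ has a neighbour inside the component $X$ and $v$ has a neighbour inside the component $Y$; hence $|V(X)| \ge 2$ and $|V(Y)| \ge 2$. It follows that $u$ and $v$ are two distinct cut vertices and that $(X, Y, H)$ with $H = e$ is precisely the three-part decomposition demanded by the lemma.

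For the identity $G_{X \to Y} = G_{Y \to X}$ I would argue directly from the construction. When $H$ is the single edge $uv$, forming $G_{X \to Y}$ detaches $X$ at $u$ and reattaches its root at $v$, so that the old vertex $u$ survives only as a pendant vertex joined to $v$ by $e$; forming $G_{Y \to X}$ symmetrically leaves $v$ as a pendant vertex at $u$. In either case the resulting graph is obtained by identifying the roots of $X$ and $Y$ at one vertex and appending a single pendant edge, so the two graphs coincide. Feeding this into Lemma~\ref{LE:two cut vertices}, which says at least one of $W(G_{X\to Y})$, $W(G_{Y\to X})$ is strictly smaller than $W(G)$, forces $W(G_{X\to Y}) < W(G)$.

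For the parity statement I would note that the reattachment alters the degree of only $u$ and $v$; all remaining vertices keep their degrees. Writing $a$ for the number of neighbours of $u$ in $X$ and $b$ for the number of neighbours of $v$ in $Y$, we have $d(u) = a+1$ and $d(v) = b+1$ in $G$, whereas in $G_{X\to Y}$ the merged root has degree $a + b + 1$ and the new pendant vertex has degree $1$. Comparing the number of even-degree vertices then reduces to the parities of $a$ and $b$: the count is unchanged exactly when $a$ and $b$ are not both odd, i.e.\ exactly when at least one of $d(u) = a+1$, $d(v) = b+1$ is odd, which is the stated hypothesis (the excluded case $a, b$ both odd is the only one in which the count drops, and it drops by $2$).

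The argument involves no real obstacle beyond bookkeeping; the one point to get right is the interpretation of the reattachment operation for $H = e$, since that is what makes $G_{X\to Y}$ and $G_{Y\to X}$ literally the same graph and pins down which single vertex becomes pendant. Once that is fixed, both the strict Wiener-index drop and the invariance of the even-degree count follow mechanically.
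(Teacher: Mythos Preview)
Your proof is correct and follows the same approach as the paper, which derives the corollary from Lemma~\ref{LE:two cut vertices} by the one-line remark that when $H$ is a single edge the two reattached graphs coincide. Your treatment simply fills in the bookkeeping (verifying the cut-vertex hypothesis, describing the pendant vertex, and the parity case analysis) that the paper leaves implicit.
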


\begin{lemma}(\cite{PB1986})
\label{LE:one cut vertex}
Let $G_1$ and $G_2$ be two disjoint graphs and $G$ be the graph obtained  by identifying $G_1$ and $G_2$ at a vertex $u$. Then
\[
W(G) = W(G_1) + W(G_2) + (|V(G_1)| -1)d(u, G_2) + (|V(G_2)| -1)d(u, G_1),
\]
where $d(u,G_i) = \sum_{v\in V(G_i)} d_{G_i}(u,v)$.
\end{lemma}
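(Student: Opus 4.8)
The plan is to compute $W(G)$ directly from its definition as a sum of pairwise distances, exploiting the fact that $u$ is a cut vertex of $G$. First I would fix notation: write $V(G)=A\cup B\cup\{u\}$ as a disjoint union, where $A=V(G_1)\setminus\{u\}$ and $B=V(G_2)\setminus\{u\}$, so that $|A|=|V(G_1)|-1$ and $|B|=|V(G_2)|-1$. The key structural observation is that, since $G$ is obtained by identifying $G_1$ and $G_2$ only at $u$, every path in $G$ joining a vertex of $G_1$ to a vertex of $G_2$ must pass through $u$. Consequently, distances inside each $G_i$ are unchanged, i.e.\ $d_G(x,y)=d_{G_i}(x,y)$ whenever $x,y\in V(G_i)$, while for $x\in A$ and $y\in B$ the shortest path splits at $u$, giving $d_G(x,y)=d_{G_1}(x,u)+d_{G_2}(u,y)$.

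Next I would partition the unordered pairs of distinct vertices of $G$ into three classes: pairs with both endpoints in $V(G_1)$, pairs with both endpoints in $V(G_2)$, and the cross pairs with one endpoint in $A$ and the other in $B$. These three classes are pairwise disjoint and exhaust all pairs, with the vertex $u$ absorbed into the first two classes (its pairs with $A$ lie in $V(G_1)$ and its pairs with $B$ lie in $V(G_2)$), so no pair is counted twice. Summing the preserved distances over the first two classes yields exactly $W(G_1)+W(G_2)$.

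It then remains to evaluate the cross sum $\sum_{x\in A}\sum_{y\in B} d_G(x,y)$. Substituting $d_G(x,y)=d_{G_1}(x,u)+d_{G_2}(u,y)$ and separating the two terms, the first term sums $d_{G_1}(x,u)$ over all $x\in A$ with each value repeated $|B|$ times, giving $|B|\sum_{x\in A}d_{G_1}(x,u)=(|V(G_2)|-1)\,d(u,G_1)$; symmetrically the second term gives $(|V(G_1)|-1)\,d(u,G_2)$, where I use that $d(u,G_i)$ already incorporates the vanishing term $d_{G_i}(u,u)=0$. Adding the three contributions produces the claimed identity. I do not anticipate a genuine obstacle here: the only points requiring care are the bookkeeping around $u$, namely checking that the three classes partition the pairs without overlap, and the justification that the cut vertex $u$ forces every cross-distance to decompose additively through $u$.
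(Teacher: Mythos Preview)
Your proof is correct: partitioning the unordered pairs into those inside $V(G_1)$, those inside $V(G_2)$, and the cross pairs $A\times B$, then using the cut-vertex property $d_G(x,y)=d_{G_1}(x,u)+d_{G_2}(u,y)$ for cross pairs, is exactly the standard argument. The paper itself does not give a proof of this lemma; it is quoted as a known result from \cite{PB1986}, so there is nothing to compare against.
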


The next two lemmas are well-known.

\begin{lemma}
\label{Le:cycle}
Let $C$ be a cycle of length $g$. Then

$$W(G) = \left\{
\begin{array}{ll}
k^3  \hskip 2cm~~~\mbox{if $g = 2k$},\\
\frac{k(k+1)(2k+1)}{2}~~~~~~\mbox{if $g = 2k+1$}.
\end{array}
\right.
$$
\end{lemma}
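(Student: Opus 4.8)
The plan is to exploit the fact that a cycle is vertex-transitive: every vertex of $C$ is equivalent under the rotational symmetry, so the transmission $d(u,C)=\sum_{v\in V(C)}d(u,v)$ takes one and the same value, say $D$, at every vertex $u$. This immediately collapses the double sum in the definition of the Wiener index to a single transmission computation,
$$
W(C)=\frac{1}{2}\sum_{u\in V(C)}d(u,C)=\frac{1}{2}\,g\,D,
$$
so it suffices to evaluate $D$ at one fixed vertex and then multiply by $\tfrac{1}{2}g$.

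To compute $D$, I would label the remaining $g-1$ vertices by their position $i\in\{1,\dots,g-1\}$ measured along one direction of the cycle starting from $u$. The distance from $u$ to the vertex in position $i$ is the shorter of the two arcs joining them, namely $d(u,v)=\min(i,\,g-i)$. Hence $D=\sum_{i=1}^{g-1}\min(i,\,g-i)$, and the evaluation naturally splits according to the parity of $g$.

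For $g=2k$, the terms with $i<k$ contribute $i$, the single antipodal term $i=k$ contributes $k$, and the terms with $i>k$ contribute $g-i$, which mirrors the first block; summing gives $D=2\bigl(1+2+\cdots+(k-1)\bigr)+k=(k-1)k+k=k^{2}$, so that $W(C)=\tfrac{1}{2}(2k)k^{2}=k^{3}$. For $g=2k+1$ there is no antipodal vertex: each of the values $1,2,\dots,k$ occurs exactly twice, so $D=2\bigl(1+2+\cdots+k\bigr)=k(k+1)$ and $W(C)=\tfrac{1}{2}(2k+1)k(k+1)=\tfrac{k(k+1)(2k+1)}{2}$.

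There is no substantial obstacle here, since the statement is elementary and everything reduces to two short arithmetic sums. The only point that warrants a little care is the bookkeeping of the middle term in the parity split: in the even case the unique antipodal vertex at distance $k$ must be counted once rather than twice, whereas in the odd case no such vertex exists and the distances pair up cleanly into two copies of $1,\dots,k$. I would therefore write out both sums explicitly so that the treatment of the central term is transparent.
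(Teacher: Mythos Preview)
Your argument is correct: exploiting vertex-transitivity to reduce $W(C)$ to $\tfrac{1}{2}gD$ and then evaluating the single transmission $D=\sum_{i=1}^{g-1}\min(i,g-i)$ according to the parity of $g$ is exactly the standard computation, and your handling of the antipodal term in the even case is fine. The paper itself supplies no proof of this lemma---it simply declares it well-known---so there is nothing to compare your approach against; what you have written is the natural elementary derivation.
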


\begin{lemma}
Let $P_t$ be a path with $t$ edges. Then
$$W(P_t) =  \frac{t(t+1)(t+2)}{6}.$$
\end{lemma}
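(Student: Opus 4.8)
The plan is to prove this purely combinatorially, since in a path the pairwise distances have an explicit closed form. First I would label the vertices of $P_t$ as $v_0, v_1, \dots, v_t$ along the path, so that it has $t+1$ vertices and the distance between two of them is simply $d(v_i, v_j) = |i - j|$. With this labeling the Wiener index becomes the double sum
$$W(P_t) = \sum_{0 \le i < j \le t} (j - i),$$
and the entire problem reduces to evaluating this sum.

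The key step is to reorganize the sum by the value of the difference rather than by the individual indices. For each fixed $d$ with $1 \le d \le t$, the number of ordered pairs $(i,j)$ with $0 \le i < j \le t$ and $j - i = d$ is exactly $t + 1 - d$, since $i$ may range over $0, 1, \dots, t - d$. Grouping the terms this way gives
$$W(P_t) = \sum_{d=1}^{t} d\,(t + 1 - d) = (t+1)\sum_{d=1}^{t} d \;-\; \sum_{d=1}^{t} d^2.$$
From here I would substitute the standard formulas $\sum_{d=1}^{t} d = \tfrac{t(t+1)}{2}$ and $\sum_{d=1}^{t} d^2 = \tfrac{t(t+1)(2t+1)}{6}$, factor out $\tfrac{t(t+1)}{6}$, and simplify the bracket $3(t+1) - (2t+1) = t + 2$ to arrive at $\tfrac{t(t+1)(t+2)}{6}$.

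An equally clean alternative, which also connects the statement to the machinery already developed, is induction on $t$ via Lemma~\ref{LE:one cut vertex}. Viewing $P_t$ as $P_{t-1}$ glued to a single edge $K_2$ at an endpoint $u$, one computes $d(u, P_{t-1}) = \tfrac{(t-1)t}{2}$ and obtains the recurrence $W(P_t) = W(P_{t-1}) + \tfrac{t(t+1)}{2}$; summing these increments (the sum of triangular numbers) yields the same closed form. I would likely present the direct double-counting argument as the main proof, since it is self-contained. In either route there is no genuine obstacle: the only work is the elementary manipulation of power sums, so the entire argument is essentially a one-line verification once the distance structure of the path is written down.
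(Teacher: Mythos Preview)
Your proof is correct. The paper itself does not actually prove this lemma: it is introduced with the remark that ``the next two lemmas are well-known'' and is stated without proof. Your direct double-counting argument (grouping pairs by their distance $d$ and summing $d(t+1-d)$) is the standard derivation and is entirely self-contained; the alternative inductive route via Lemma~\ref{LE:one cut vertex} is also valid and ties the result into the paper's toolkit, though it is not needed here. Either version would be an acceptable justification for a statement the paper simply takes for granted.
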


Let $e=uv$ be an edge in $G$. Contracting the edge $e$  means deleting the edge $uv$ and then identifying  $u$ and $v$.

The following lemma is a key lemma in the proof of our main results.

\begin{lemma}
\label{Le:shorter cycle}
Let $G$ be a unicyclic graph with the cycle $C$  of  length at least $4$  and maximum degree at least $3$ satisfying:  $G \not = H(K_2, K_1, K_1,  K_i, K_1 )$ for each  $i=1,2$,  every vertex $x$ in $C$ has either degree 2 or  odd degree adjacent to exactly $d_G(x)-2$ leaf vertices, and at most one vertex in $C$  has degree at least $4$.
Let $uv$ be an edge in $C$ such that

\noindent
 (i)  $d_G(u)$ is the maximum;

\noindent
  (ii)  $d_G(u)+d_G(v)$ is also maximum subject to (i).

  Let $G_1$ be the graph obtained from $G$ by contracting the edge $uv$ and adding a leaf adjacent to $u$.  Then

(1) $|V(G)| = |V(G_1)|$ and $G_1$ and $G$ have  the same number of even degree vertices;

(2) $W(G_1) < W(G)$.
\end{lemma}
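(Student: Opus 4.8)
The plan is to dispose of the structural part (1) by a parity count, and then to reduce (2) to a single explicit identity whose sign can be read off from the cycle length and the choice of the edge $uv$.

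First I would use the hypotheses to pin down the shape of $G$. The condition that every $x\in C$ has degree $2$ or odd degree with exactly $d_G(x)-2$ pendant leaves forces each component of $G-E(C)$ to be a star, so $G$ is simply the cycle $C=v_1\cdots v_\ell v_1$ carrying $a_i:=d_G(v_i)-2$ pendant leaves at $v_i$, where each $a_i$ is $0$ or odd and, since at most one cycle vertex has degree $\ge4$, at most one $a_i$ exceeds $1$. Relabelling so that $u=v_1,\ v=v_2$, conditions (i)–(ii) give $a_1=\max_i a_i\ge1$ and that $a_2$ is maximal among the two neighbours of $v_1$; in particular $a_2\le1$ and $a_2\ge a_\ell$. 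For part (1), contracting $uv$ and adding a leaf keeps $|V|$ fixed and changes only the degrees of $u,v$: they are replaced by one vertex of degree $d_G(u)+d_G(v)-1$ and a new leaf of degree $1$. Since $d_G(u)\ge3$ is odd, the merged degree has parity opposite to $d_G(v)$, so the number of even-degree vertices among the two affected vertices is $[d_G(v)\text{ even}]$ both before and after; all other degrees are untouched, which settles (1).

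For (2) I would fix the bijection $\phi\colon V(G)\to V(G_1)$ fixing every $v_i\ (i\ne2)$ and every leaf not at $v_2$, sending $v_2$ to the new leaf and the $a_2$ leaves of $v_2$ to $a_2$ of the leaves now at $v_1$. Grouping the vertices into the block $B_1=\{v_1,v_2\}\cup(\text{their leaves})$ and blocks $B_j=\{v_j\}\cup(\text{its leaves})$ for $j\ge3$, every pairwise distance is expressed through the cycle distances $d_{C_\ell}$ and $d_{C_{\ell-1}}$ (here $C_{\ell-1}$ has vertex set $\{v_1,v_3,\dots,v_\ell\}$, the merged vertex keeping the name $v_1$; Lemma~\ref{Le:cycle} and Lemma~\ref{LE:one cut vertex} supply the needed transmissions). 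A direct evaluation collapses the internal-$B_1$ contribution to $+a_1a_2$ and each pair $(B_i,B_j)$ with $i,j\ge3$ to $\gamma_{ij}(1+a_i)(1+a_j)$, where $\gamma_{ij}=d_{C_\ell}(v_i,v_j)-d_{C_{\ell-1}}(v_i,v_j)\in\{0,1\}$, yielding
\[
W(G)-W(G_1)=a_1a_2+\sum_{3\le i<j\le\ell}\gamma_{ij}(1+a_i)(1+a_j)+\sum_{j=3}^{\ell}\bigl[(1+a_1)\alpha_j+(1+a_2)\beta_j-1\bigr](1+a_j),
\]
with $\alpha_j=d_{C_\ell}(v_1,v_j)-d_{C_{\ell-1}}(v_1,v_j)\in\{0,1\}$ and $\beta_j=d_{C_\ell}(v_2,v_j)-d_{C_{\ell-1}}(v_1,v_j)\in\{0,1\}$.

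The sign analysis then proceeds as follows. The only possible negative contributions come from the $-1$'s in the last sum, and a short computation of $\alpha_j,\beta_j$ from the position of $v_j$ shows $\alpha_j+\beta_j\ge1$ for every $j$ except, when $\ell$ is odd, the single antipodal index $j^\ast=(\ell+3)/2$, where $\alpha_{j^\ast}=\beta_{j^\ast}=0$ gives the deficit $-(1+a_{j^\ast})\ge-2$. For even $\ell$ there are no negative terms and already $j=3$ contributes $a_1(1+a_3)\ge1$, so $W(G)>W(G_1)$. For odd $\ell\ge7$ the near-side indices $j=3,\dots,\lfloor(\ell+2)/2\rfloor$ (at least two of them) each contribute $a_1(1+a_j)\ge1$, while $\gamma_{3,\ell}=1$, and balancing these against the single deficit $\le2$ gives strict positivity. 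The genuinely delicate case, which I expect to be the main obstacle, is $\ell=5$: there the tail is so short that $\gamma_{ij}\equiv0$ and the identity reduces to $a_1a_2+a_1(1+a_3)-(1+a_4)+a_2(1+a_5)$ with $a_5\le a_2$. Here condition (ii) is used decisively — whenever two pendant vertices are adjacent it selects the edge joining them (both of degree $\ge3$), forcing positivity, and $a_1\ge3$ also forces it — and a finite check of the remaining $C_5$ configurations shows the inequality fails only when the pendant vertices are few and non-adjacent, i.e.\ exactly for $H(K_2,K_1,K_1,K_1,K_1)$ (equality) and $H(K_2,K_1,K_1,K_2,K_1)$ ($W(G_1)>W(G)$). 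These are precisely the excluded graphs, so the hypothesis $G\ne H(K_2,K_1,K_1,K_i,K_1)$ is exactly what is needed; the crux of the whole argument is the odd-cycle bookkeeping near the antipode and matching these short-cycle exceptions to the two excluded configurations.
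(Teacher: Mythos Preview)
Your proof is correct, and the derivation of the closed-form identity for $W(G)-W(G_1)$ is clean; I checked the block computation for $B_1$, the values of $\alpha_j,\beta_j$ for even and odd $\ell$, and the $\ell=5$ case analysis, and they all go through. The observation that $\alpha_j+\beta_j\ge 1$ everywhere except at the single antipode $j^\ast=(\ell+3)/2$ when $\ell$ is odd is exactly the right structural fact, and your use of $\gamma_{3,\ell}=1$ to kill the residual deficit at $\ell=7$ is correct. The $\ell=5$ endgame also matches: with $a_1=1$ and $a_2=0$ (hence $a_5=0$ by~(ii)), the expression reduces to $a_3-a_4$, and the constraint from~(ii) that no two vertices with $a_i=1$ are adjacent rules out $(a_3,a_4)=(1,1)$, leaving precisely the two excluded graphs as the non-positive cases.

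The paper's proof is organized differently. Rather than a single identity, it isolates the pure cycle contribution $\sum_{i,j\ne v}(d_G(u_i,u_j)-d_{G_1}(u_i,u_j))=\binom{k}{2}$ via Lemma~\ref{Le:cycle}, then bounds the remaining terms (distances involving $v=u_1$, the leaves of $u$, and the possible leaf $u_1'$) through a sequence of pointwise claims about $d_{G_1}$ versus $d_G$ on the sets $A,B,C,D$ determined by position relative to the antipode. This gives the quick estimate $W(G)-W(G_1)\ge \binom{k}{2}-[\ell\text{ odd}]$, settling everything except $\ell=5$, which is then handled by the size inequalities $|D|\ge|B|,|C|$ coming from~(i)--(ii). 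Your approach trades that short global bound for an explicit formula in the $a_i$; the payoff is that the role of the antipodal vertex and the reason the two $C_5$ exceptions appear become completely transparent, whereas the paper's argument reaches the same conclusions but the $\ell=5$ bookkeeping is more ad hoc.
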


\begin{proof}
Denote $C= u_1u_2\dots u_gu_1$ and $k = \lfloor\frac{g}{2}\rfloor$. Without loss of generality, assume  that $u_2u_1 =uv$ is the edge satisfying (i) and (ii) where $u=u_2$ and $v=u_1$.    Let  $u_i'$ be a leaf  vertex adjacent to $u_i$  in $G$ if $d_G(u_i) \geq 3$.   Note that every vertex $x\not = u_2$ in $C$ is adjacent to at most one leaf  vertex in $G$ and   in $G_1$, $u_1$ is a leaf  vertex adjacent to $u_2$ and if $d_G(u_1) = 3$, then $u_1'$ is also a leaf vertex adjacent to $u_2$.

Obviously $G$ and $G_1$ have the same number of vertices.

Note that $d_G(u_2)$ is odd since $d_G(u_2) \geq 3$ is odd by the assumption. If $d_G(u_1) = 2$, then $d_{G_1}(u_2) = d_G(u_2) + 1$ is even and $d_{G_1}(u_1) = 1$.  If $d_G(u_1) = 3$, then $d_{G_1}(u_2) = d_G(u_2)  +2$ remains odd and $d_{G_1}(u_2)  = d_{G_1}(u_2')= 1$. Thus $G_1$ and $G$ have the same number of even degree vertices.  This proves (1).

 \medskip \noindent
Now we are going to prove (2): $W(G_1) < W(G)$.

We first have the following observations.
\begin{claim}
\label{claim1}
(1) $d_{G_1}(x,y) \leq d_G(x,y)$ for any two distinct vertices  with  $u_1\not  \in \{x,y\}$.

(2) If $x\not = u_1$, $d_{G_1}(x,G_1\setminus\{u_1\}) \le d_{G}(x,G\setminus\{u_1\})$.

(3) $\displaystyle \sum_{2\leq i < j \leq g} (d_G(u_i,u_j)-d_{G_1}(u_i,u_j)) =  \frac{k(k-1)}{2}$.
\end{claim}
\begin{proof}
(1) and (2) are obvious by the definition of $G_1$.     Now we prove (3).

Since $d_G(u_1,C_g) = \frac{2W(C_g)}{g}$, we have
$$\sum_{2\leq i < j \leq g} (d_G(u_i,u_j)-d_{G_1}(u_i,u_j)) = W(C_g)- d_G(u_1,C_g) -W(C_{g-1}) = \frac{(g-2)}{g}W(C_g)-W(C_{g-1}).$$

If $g = 2k +1$, then   by Lemma~\ref{Le:cycle}, we have
 $$\frac{g-2}{g}W(C_g)-W(C_{g-1}) = \frac{(2k-1)}{2k+1}\frac{k(k+1)(2k+1)}{2}-k^3 = \frac{k(k-1)}{2}.
 $$
 If  $g = 2k$, then $$\frac{g-2}{g}W(C_g)-W(C_{g-1}) = \frac{(2k-2)}{2k}k^3-\frac{(k-1)k(2k-1)}{2} = \frac{k(k-1)}{2}.
 $$
\end{proof}

 Denote $A = \cup_{i=3}^{k}N[u_i]\setminus\{u_2, u_{k+2}\}$,  $B = \cup_{i=k+3}^{g}N[u_i]\setminus\{u_1, u_{k+2}\}$,  $C=N_G[u_{k+2}]\setminus \{u_{k+1}, u_{k+3}\}$, and $D=N_G[u_2]\setminus \{u_1,u_3\}$. That is, $A$ consists of $u_3,\dots,u_{k+1}$ together with their respective leaf neighbors if they exist,  $B$ consists of $u_{k+3},\dots, u_g,$ with their similarly defined leaf neighbors,     $C$  consists of  $u_{k+2}$ together with its leaf neighbor (if it exists), and  $D$  consists of  $u_{2}$ together with its leaf neighbor (if it exists).

 \begin{claim}
 \label{claim2}
 We have the following statements.

 (1) For each $v\in A$ and each leaf neighbor $w$ of $u_2$, $d_{G_1}(u_1,v) = d_{G}(u_1,v)$ and $d_{G_1}(w,v)= d_{G}(w,v)$.

 (2) For each $v\in B$ and each leaf neighbor $w \in D$, $d_{G_1}(u_1,v) = d_{G}(u_1,v)+1$ and $d_{G_1}(w,v)= d_{G}(w,v)-1$.

 (3) For each $v\in C$ and each leaf neighbor $w$ of $u_2$, we have

  (3-1)  $d_{G_1}(u_1,v) = d_{G}(u_1,v)+1$;

  (3-2)  $d_{G_1}(w,v) = d_{G}(w,v)$ if $g = 2k+1$ and $d_{G_1}(w,v) = d_{G}(w,v) -1$ if $g = 2k$.
 \end{claim}
 \begin{proof} By the definition of $G_1$, we have
 \[
  d_{G_1}(u_2,v) =  \left\{
\begin{array}{ll}
d_{G}(u_2,v)   ~~~~~~~~~~~\mbox{if $v\in A$ or if $v\in C$ when $g = 2k+1$},\\
d_{G}(u_2,v) -1 ~~~~~~\mbox{if $v\in B$ or if $v\in C$ when $g=2k$}.
\end{array}
\right.
\]

By the definition of $G_1$ again, we have   that if $w=u_1$ or if $w$ is a leaf neighbor of $u_2$, then  for every $v\in A\cup B\cup C$,
$$d_{G_1}(w,v) =   1 + d_{G_1}(u_2,v).$$
This proves the claim.
 \end{proof}

 \begin{claim}
 \label{claim3}
 If  $d_G(u_1) = 3$, then  $\sum_{i=1}^{k+1}(d_G(u_1',u_i) -  d_{G_1}(u_1',u_i))  = -1 + k > 0$.
  \end{claim}
\begin{proof}
 It follows from the following:

  \medskip
   $\bullet$ $d_G(u_1',u_1) =1 =  d_{G_1}(u_1',u_1) - 1$.

    \medskip
   $\bullet$  $d_G(u_1',u_i) =  d_{G_1}(u_1',u_i)  + 1$ for each $i = 2,\dots, k+1$.
   \end{proof}

  By Claims~\ref{claim1}-\ref{claim3},  if $g = 2k$ , we have
 $W(G) - W(G_1) \geq \frac{k(k-1)}{2} > 0$;

  And if $g = 2k+1$, then  $W(G) - W(G_1) \geq  \frac{k(k-1)}{2} - 1 \geq 0$.    Thus if $k\geq 3$, then $W(G) -W(G_1) > 0$.

 Now assume $k=2$ and $g = 5$.  By Claim~\ref{claim2}, we have the following:

 $\bullet$ $\sum_{x\in B\cup C}(d_G(u_1,x)-d_{G_1}(u_1,x)) = \sum_{x\in B\cup C} (-1) = - |B\cup C| = -|B| - |C|$

 $\bullet$ $\sum_{x\in B, y\in D}(d_G(x,y)-d_{G_1}(x,y)) = \sum_{x\in B, y\in D} (1) = |B||D|$

 $\bullet$  If $d_G(u_1) = 3$, then  $d_{G}(u_1,u_1') = 1 + d_{G_1}(u_1,u_1') $ and $\sum_{x\in D\cup A} (d_G(x,u_1')-d_{G_1}(x,u_1'))= |A| + |D|$.

Note that  by the hypothesis, $|D| \geq |B| $ and $|D| \geq |B|$. Thus  if $d_G(u_1) = 3$,
$$W(G)-W(G_1)\ge |B||D|+|D| + |A|+1-|B|-|C|>0.$$

If $d_G(u_1) = 2$, then $d_G(u_3) = 2$ and $d_G(u_2) + d_G(u_1) \geq d_G(u_4) + d_G(u_5)$.  Hence $|D| +2 \geq |B| + |C|$. Moreover since $G \not  = H(K_2, K_1,  K_1, K_i, K_1 )$ for  each $i=1,2$, $d_G(u_2) \geq 5$. This implies $2\leq |B| \leq 3$ and $2\leq |C|\leq 3$. Therefore
 $$W(G)-W(G_1)\ge |B||D|-|B|-|C|>0.$$
 This completes the proof of the lemma.
     \end{proof}

\section{Proof of Theorem~\ref{TH: Conf}}
We will complete the proof of Theorem~\ref{TH: Conf} in this section.

We first need to introduce more notations.
 Let $G$ be a unicyclic graph with the cycle  $C$.   Recall that  $G-E(C)$ is a forrest such that each component is a tree rooted at a vertex $u \in V(C)$. Denote such a tree by $T_u$ and denote  by $G-T_u$ the graph obtained from $G$ by deleting all edges in $E(T_u)$ and all resulting isolated vertices. That is $G-T_u$  is the graph obtained from $G$ by deleting all vertices in $T_u$ except $u$. Similarly we can define $G-T_u-T_v$. $T_u$ is called trivial if $T_v = K_1$, i.e. $d_G(v) = 2$.

 Let $T$ be a  tree  rooted at $u$. Let  $v$ be a vertex in $T$ and $S$ be a set of  some children of $v$. Denote  by $T_v(S)$ the subtree rooted at $v$  induced by $\{v\}\cup S$ and the descendents of $S$.  If $S = \{x\}$ or $S= \{x,y\}$, we simply denote it by    $T_v(x)$ or $T_v(x,y)$, respectively.

Now we are ready to prove Theorem~\ref{TH: Conf}.

\medskip
\noindent
{\bf Proof of Theorem~\ref{TH: Conf}.} Let $G \in  \mathcal{U}_{n,r}$ such that $W(G)$ is as small as possible.  Let $C = u_1u_2\dots u_gu_1$ be the cycle in $G$.

\begin{claim}
\label{odd cut}
Let $e =xy $ be a nontrivial bridge, meaning $\min\{d_G(x), d_G(y)\}\geq 2$. Then both $d(x)$ and $d(y)$ are even.
\end{claim}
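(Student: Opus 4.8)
The plan is to argue by contradiction using the edge-moving operation from Lemma~\ref{LE:two cut vertices}, exploiting the fact that $W(G)$ is minimum. Suppose $e = xy$ is a nontrivial bridge with $\min\{d_G(x), d_G(y)\} \geq 2$, and suppose for contradiction that at least one of $d(x), d(y)$ is odd. Since $e$ is a bridge, removing $e$ splits $G$ into two components $X$ (containing $x$) and $Y$ (containing $y$); taking $H = e$ puts us exactly in the setting of Corollary~\ref{COR:bridge}. That corollary already tells us $W(G_{X\to Y}) < W(G)$, so the only way to avoid a contradiction with minimality is if $G_{X\to Y} \notin \mathcal{U}_{n,r}$, i.e.\ the operation changes the number of even-degree vertices or destroys the unicyclic structure.

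First I would observe that the contraction-type move of Corollary~\ref{COR:bridge} keeps the number of vertices fixed and keeps the graph connected and unicyclic (the bridge $e$ is not on the cycle $C$, so collapsing $X$ onto $Y$ leaves the cycle intact). So the only obstruction is the even-degree count. The key step is the degree bookkeeping: when we slide $X$ over so that $x$ and $y$ coincide, the vertex $x$ becomes a leaf (degree $1$, hence odd) and the vertex $y$ absorbs the $d_G(x)-1$ edges formerly at $x$, so its new degree is $d_G(y) + d_G(x) - 2$. I would compare the parities before and after. The number of even-degree vertices changes only through the pair $(x,y)$; every other vertex keeps its degree. So I need to check that if exactly one of $d(x), d(y)$ is odd, then the operation preserves the even-degree count, forcing $G_{X\to Y} \in \mathcal{U}_{n,r}$ and contradicting minimality.

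The parity computation is the heart of the argument. Before the move, $x$ has degree $d_G(x)$ and $y$ has degree $d_G(y)$; after the move, $x$ has degree $1$ (odd) and $y$ has degree $d_G(x) + d_G(y) - 2$, which has the same parity as $d_G(x) + d_G(y)$. Suppose without loss of generality $d_G(x)$ is odd. Then before, $x$ contributes an odd vertex; after, $x$ is still odd. For $y$: before it contributes according to the parity of $d_G(y)$; after, $y$'s parity is that of $d_G(x) + d_G(y) = \text{odd} + d_G(y)$, which flips $y$'s parity. Thus if $d_G(y)$ is even (the ``exactly one odd'' case), $y$ goes from even to odd, decreasing the even count by one; if $d_G(y)$ is odd as well, then $x$ (odd$\to$odd) and $y$ (even$\to$even after summing two odds) both stay, preserving the count. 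So the even-degree count is preserved precisely when both are odd, and otherwise drops. This is a sign the parity statement needs handling case by case, and I would line the cases up carefully rather than asserting a single ``same parity'' claim.

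The main obstacle I expect is reconciling the parity cases with the claim's conclusion, since a naive reading of Corollary~\ref{COR:bridge} suggests the move is always available. The resolution is that the ``In particular'' sentence of that corollary only guarantees membership in $\mathcal{U}_{n,r}$ when one endpoint has odd degree, and I must instead run the argument so that \emph{assuming} a contradiction (one endpoint odd) produces a graph in $\mathcal{U}_{n,r}$ with strictly smaller Wiener index. Concretely: if one of $d(x),d(y)$ is odd, Corollary~\ref{COR:bridge} says $G$ and $G_{X\to Y}$ have the same number of even-degree vertices, so $G_{X\to Y} \in \mathcal{U}_{n,r}$ with $W(G_{X\to Y}) < W(G)$, contradicting the minimality of $W(G)$. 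Hence neither $d(x)$ nor $d(y)$ can be odd, i.e.\ both are even, which is exactly the claim. The delicate point to get right is verifying that the cycle $C$ survives the move (so the result is genuinely unicyclic) and that no accidental vertex coincidence changes another vertex's parity; I would spell out that $e$ being a bridge guarantees $x,y \notin$ the same $2$-connected block as the cycle in a way that keeps everything clean.
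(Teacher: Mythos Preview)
Your final argument is exactly the paper's proof: assume one endpoint has odd degree, invoke Corollary~\ref{COR:bridge} to conclude $G_{X\to Y}\in\mathcal{U}_{n,r}$ with $W(G_{X\to Y})<W(G)$, and contradict minimality.

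One arithmetic slip in your exploratory parity bookkeeping is worth fixing, since it is the reason your case analysis disagreed with Corollary~\ref{COR:bridge}. After the move, $x$ becomes a leaf and $y$ absorbs the $d_G(x)-1$ non-bridge edges of $x$ \emph{while keeping} the pendant edge $xy$; hence the new degree of $y$ is $d_G(y)+(d_G(x)-1)=d_G(x)+d_G(y)-1$, not $d_G(x)+d_G(y)-2$. With the correct formula: if $d_G(x)$ is odd then $x$ stays odd and $y$'s parity is unchanged; if $d_G(x)$ is even and $d_G(y)$ is odd then $x$ goes even$\to$odd while $y$ goes odd$\to$even; in either case the even-degree count is preserved. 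So the count is preserved whenever \emph{at least one} endpoint is odd, precisely what the corollary asserts and what you correctly rely on in your concluding paragraph. The worry about the cycle surviving is harmless but unnecessary: a bridge cannot lie on $C$, so the move leaves $C$ intact.
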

\begin{proof}
Let $H = e$, $X$ and $Y$ be two components of $G-e$. If one of $d(x)$ and $d(y)$ is odd, by Corollary~\ref{COR:bridge}, $G_{X\to Y} \in \mathcal{U}_{n,r}$ and $W(G_{X\to Y}) < W(G)$, a contradiction to the minimality of $W(G)$. Thus both $d(x)$ and $d(y)$ are even.
\end{proof}

\begin{claim}
\label{odd star}
Let $u \in V(C)$.

(1) If $d_G(u)$  is odd,  then $T_u$ is a star.

(2) If $d_G(u)\geq 4$ is even, then $T_u$ is a subdivided star.
\end{claim}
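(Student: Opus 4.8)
The plan is to prove both parts by a single mechanism: if the tree $T_u$ has a ``forbidden'' internal structure, I will locate a nontrivial bridge and derive a contradiction via Claim~\ref{odd cut}, or I will apply Lemma~\ref{LE:two cut vertices} to produce a strictly smaller Wiener index. First I observe what the two target conclusions actually forbid. A subdivided star $S(u)$ is precisely a tree in which \emph{every} vertex other than $u$ and the leaves has degree exactly $2$; equivalently, $u$ is the only vertex of degree $\geq 3$. A star $SB(1;b)$ is the further special case where additionally every branch has length exactly $1$. So for part (2) I must show that in $T_u$ no internal vertex other than $u$ branches, and for part (1) I must show the stronger statement that $u$'s neighbors in $T_u$ are all leaves.

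For part (2), suppose $T_u$ is not a subdivided star, so there is a vertex $w \neq u$ in $T_u$ with $d_{T_u}(w) \geq 3$; choose such a $w$ at maximum distance from $u$, so all of $w$'s children in $T_u$ lead only to paths (degree-$2$ internal vertices down to leaves). The edge of $T_u$ entering $w$ from its parent is a bridge of $G$; since $d_G(w) \geq 3 > 1$ and the parent also has degree $\geq 2$ (it lies on the $u$--$w$ path or is $u$ itself with $d_G(u) \geq 4$), this is a \emph{nontrivial} bridge. By Claim~\ref{odd cut} both endpoints must then have even degree. The idea is to show this forces a contradiction: because $w$ has $d_{T_u}(w)\ge 3$ with at least two pendant-path children, I can rebalance the branches at $w$ using a bridge move (Corollary~\ref{COR:bridge} on one of the path-bridges hanging off $w$, chosen so an odd-degree endpoint appears) to strictly decrease $W$ while staying in $\mathcal{U}_{n,r}$. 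The cleanest route is: pick the bridge joining $w$ to one of its path-children; one endpoint is $w$ and the other is an internal degree-$2$ vertex (degree even? no---degree $2$ is even). Here the parity bookkeeping is delicate, so I would instead argue directly from Lemma~\ref{LE:two cut vertices}: take $u$ and $w$ as the two cut vertices, let $X$ be the branches of $T_u$ not through $w$, let $Y$ be the subtree below $w$, and let $H$ be the rest; moving whichever of $X,Y$ strictly lowers $W$ gives a graph still in $\mathcal{U}_{n,r}$ provided the move preserves the even-degree count, which I arrange by choosing the cut vertices so that at least one has odd degree.

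For part (1), with $d_G(u)$ odd, I know from part (2)'s argument that $T_u$ is at least a subdivided star, so it remains to rule out any branch of length $\geq 2$. If some branch is a path of length $\geq 2$, its first edge $uz$ (with $z$ the neighbor of $u$ on that branch, $d_{T_u}(z)=2$) together with the parity $d_G(u)$ odd gives me exactly the hypothesis of Corollary~\ref{COR:bridge}: the edge incident to an odd-degree vertex, so the bridge move preserves the number of even-degree vertices and strictly decreases $W$, contradicting minimality. Hence every branch has length $1$ and $T_u = SB(1;b)$, a star.

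The main obstacle I anticipate is the \emph{parity bookkeeping} when applying the cut-vertex moves: Lemma~\ref{LE:two cut vertices} and Corollary~\ref{COR:bridge} only keep us inside $\mathcal{U}_{n,r}$ when the relevant endpoint has odd degree (so that its degree parity is unchanged by the move). In part~(1) the odd degree of $u$ makes this automatic, which is why that case is clean. In part~(2), where $u$ has even degree and all internal vertices of $T_u$ have even degree $2$ as well, I cannot invoke Corollary~\ref{COR:bridge} naively---every candidate bridge may have both endpoints of even degree, which is exactly what Claim~\ref{odd cut} permits rather than forbids. The real work is therefore to show that a genuine branching vertex $w$ (degree $\geq 3$) must \emph{itself} be the odd-degree endpoint needed to drive the contradiction, or else to reorganize the argument so that the branching at $w$ creates two symmetric pendant paths whose exchange strictly decreases $W$ by Lemma~\ref{LE:one cut vertex} while leaving all degrees (hence the parity count) untouched. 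I expect the proof to hinge on this last observation: moving a pendant path from the shorter side of $w$ to the longer side (or vice versa) is a pure rearrangement that changes no vertex degree, so it trivially stays in $\mathcal{U}_{n,r}$, and a direct $W$ computation via Lemma~\ref{LE:one cut vertex} shows it decreases $W$ unless $w$ had no branching at all.
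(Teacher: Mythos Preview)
Your treatment of part~(1) is more circuitous than needed but reaches the right endpoint: once you observe that a non-star $T_u$ has a neighbor $z$ of $u$ with $d_G(z)\ge 2$, the edge $uz$ is a nontrivial bridge with $d_G(u)$ odd, contradicting Claim~\ref{odd cut}. You need not pass through part~(2) first; the paper does this in one line.

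Part~(2) has a genuine gap. You correctly diagnose the obstacle: when $d_G(u)$ is even and the branching vertex $w$ has even degree, Corollary~\ref{COR:bridge} does not apply, and your ``at least one cut vertex of odd degree'' cannot be arranged in general (take $d_G(w)=4$). Your fallback---rebalancing pendant paths at $w$ without changing any degree---does not remove the branching at $w$, so it cannot force $T_u$ to be a subdivided star; at best it equalizes branch lengths \emph{at} $w$.

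The idea you are missing is that parity preservation in Lemma~\ref{LE:two cut vertices} does not require an odd-degree cut vertex; it suffices that the piece being moved contributes an \emph{even} amount to the degree at its attachment point, so that detaching and reattaching changes degrees by $\pm 2$. Concretely: at the branching vertex $v$ take $X$ to be the subtree consisting of \emph{exactly two} of $v$'s child-branches, so $d_X(v)=2$; take $Y=G-T_u$, so $d_Y(u)=2$ (the two cycle edges); and let $H$ be the remainder. Then both $G_{X\to Y}$ and $G_{Y\to X}$ change the degrees of $u$ and $v$ by $\pm 2$, hence lie in $\mathcal{U}_{n,r}$ regardless of the parities of $d_G(u)$ and $d_G(v)$, and Lemma~\ref{LE:two cut vertices} gives the contradiction. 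This is exactly the paper's argument.
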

\begin{proof}
(1) Assume that $d(u)$ is odd. Then $d(u) \geq 3$. Suppose to the contrary that $T_u$ is not a star. Then $d(u) \geq 3$ and there is an edge $uv \in T_u$ such that  $uv$ is a nontrivial bridge in $G$, a contradiction to Claim~\ref{odd cut}.

(2) Assume  that $d(u) \geq 4$ is even.  Suppose to the contrary that $d(v) \geq 3$  for some vertex $v \in V(T_u) \setminus \{u\}$. Let $X$ be a subtree rooted at $v$  such that $d_X(v) = 2$. Let $Y= G-T_u$ and $H = T_u- (X-v)$.  Since $d_X(v) = 2$ and $d_Y(u) = 2$ both are even, both $G_{X\to Y}$ and $G_{Y\to X}$ belong to   $\mathcal{U}_{n,r}$. And by Lemma~\ref{LE:two cut vertices},
either $W(G_{X\to Y}) < W(G)$ or  $W(G_{Y\to X})< W(G)$, a contradiction to the minimality of $W(G)$.
\end{proof}

\begin{claim}
\label{one big vertex}
There are at most  one vertex in $V(C)$ with  at  least $4$.
\end{claim}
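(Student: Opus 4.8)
The plan is to argue by contradiction using the two-cut-vertex exchange of Lemma~\ref{LE:two cut vertices}, where the whole point will be to choose the moved piece so that parities — and hence membership in $\mathcal{U}_{n,r}$ — are preserved. So suppose, contrary to the claim, that there are two distinct vertices $u, v \in V(C)$ with $d_G(u) \geq 4$ and $d_G(v) \geq 4$. Since $u$ and $v$ lie on $C$, two of the edges at each of them belong to $C$, so $T_u$ has at least $d_G(u) - 2 \geq 2$ branches and $T_v$ has at least $d_G(v) - 2 \geq 2$ branches; in particular $u$ and $v$ are both cut vertices of $G$. Note that this lower bound of two branches holds regardless of the parity of $d_G(u)$ and $d_G(v)$, so I do not even need to split into cases by parity (the structural information from Claim~\ref{odd star} that these trees are stars or subdivided stars is available but not essential here).

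Next I would set up the exchange so that it preserves the even-degree count. Let $X$ be the subtree of $T_u$ consisting of $u$ together with exactly two of its branches, so that $d_X(u) = 2$, and symmetrically let $Y$ be the subtree of $T_v$ consisting of $v$ together with two of its branches, so that $d_Y(v) = 2$. Let $H$ be the graph obtained from $G$ by deleting the vertices of $X$ and $Y$ other than $u$ and $v$; then $H$ contains the cycle $C$ together with all remaining branches, $X$ and $H$ share only $u$, $Y$ and $H$ share only $v$, and $X \cap Y = \emptyset$. This is precisely the configuration required by Lemma~\ref{LE:two cut vertices}, which yields $W(G_{X \to Y}) < W(G)$ or $W(G_{Y \to X}) < W(G)$; say the former.

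It remains to check that $G_{X \to Y}$ still lies in $\mathcal{U}_{n,r}$, which gives the desired contradiction with the minimality of $W(G)$. In $G_{X \to Y}$ the subtree $X$ is detached from $u$ and reattached at $v$, so $d(u)$ decreases by $d_X(u) = 2$ and $d(v)$ increases by $d_X(u) = 2$, while every other vertex degree is unchanged. Since both changes are by an even amount, the parity of every degree is preserved, and hence $G_{X \to Y}$ has exactly $r$ vertices of even degree; it also has $n$ vertices and is still unicyclic, because the unique cycle lies entirely in $H$ and is untouched by the move. Thus $G_{X \to Y} \in \mathcal{U}_{n,r}$ with strictly smaller Wiener index, a contradiction. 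The case $W(G_{Y \to X}) < W(G)$ is identical with the roles of $u$ and $v$ interchanged, using $d_Y(v) = 2$.

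The subtle point — and the reason for insisting that each moved subtree have root-degree exactly $2$ rather than $1$ — is exactly this preservation of the even-degree count: moving a single branch would flip the parity of both $d(u)$ and $d(v)$ and thereby drop the graph into $\mathcal{U}_{n,r-2}$, making the comparison inadmissible. Once the root-degree-$2$ choice is made, everything else is a direct application of the exchange lemma, so I expect no further obstacle; the only thing to verify carefully is that $u$ and $v$ genuinely have at least two branches each, which is immediate from $d_G(u), d_G(v) \geq 4$.
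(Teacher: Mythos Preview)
Your proof is correct and follows essentially the same approach as the paper: assume two cycle vertices of degree at least $4$, choose subtrees $X \subseteq T_u$ and $Y \subseteq T_v$ with $d_X(u) = d_Y(v) = 2$, and apply Lemma~\ref{LE:two cut vertices} to contradict minimality. Your exposition is more detailed in justifying why the parity of every degree is preserved (and hence why both $G_{X\to Y}$ and $G_{Y\to X}$ remain in $\mathcal{U}_{n,r}$), but the argument is the same.
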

\begin{proof}
Suppose to the contrary that $u$ and $v$ are two vertices in $V(C)$ with degree at least $4$. Let $X$ be a subtree of $T_u$ rooted at $u$ such that $d_X(u) = 2$ and $Y$ be a subtree of $T_v$ rooted at $v$ such that $d_Y(v) = 2$.   Let $H= G- (X-u) - (Y-v)$. Then both $G_{X\to Y}$ and $G_{Y\to X}$ belong to $\mathcal{U}_{n,r}$. By Lemma~\ref{LE:two cut vertices},  either $W(G_{X\to Y}) < W(G)$ or $W(G_{Y\to X}) < W(G)$, a contradiction to the minimality of $W(G)$.
\end{proof}

  Without loss of generality, assume $d_G(u_1) =  \max \{d_G(u)| u \in V(C)\}$. We consider three cases according to $d_G(u_1)$ in the following.

\subsection{ The case when $d_G(u_1) = \max \{d_G(u)| u \in V(C)\} = 2$}

In this case, $g = |V(G)|$ and  $G = C_n$ with $r  = n$. $C_n$ is the only graph in $\mathcal{U}_{n,n}$.  Thus the theorem is true.

\subsection{The case when $d_G(u_1) = \max \{d_G(u)| u \in V(C)\}  \geq 3$ is odd }

\begin{claim}
\label{triangle}
 Either $g=5$ and $G = H(K_2,K_1,K_1,K_1,K_1)$ or $g = 3$ and  thus $r \leq 2$.  In particular, $d_G(u_2) = d_G(u_3) = 2$ if $r = 2$ or $d_G(u_2) = 2$ and $d_G(u_3) = 3$ if $r = 1$, or $d_G(u_2) = d_G(u_3) = 3$ if $r = 0$.
\end{claim}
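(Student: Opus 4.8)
The plan is to use Lemma~\ref{Le:shorter cycle} to force the cycle to be short, and then analyze the few surviving possibilities. First I would check that the minimal graph $G$ in this subsection (where $d_G(u_1)$ is the odd maximum cycle degree) satisfies every hypothesis of Lemma~\ref{Le:shorter cycle} apart from the cycle-length and exceptional-graph assumptions. The requirement ``maximum degree at least $3$'' is immediate from $d_G(u_1)\geq 3$, and ``at most one vertex of $C$ has degree $\geq 4$'' is exactly Claim~\ref{one big vertex}. The crucial observation is that \emph{no cycle vertex has even degree $\geq 4$}: if some $x\in V(C)$ had even degree $\geq 4$, then by Claim~\ref{one big vertex} it would be the unique cycle vertex of degree $\geq 4$, forcing $d_G(u_1)\leq 3$ and hence $d_G(x)\leq d_G(u_1)\leq 3$, a contradiction. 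Consequently every cycle vertex has either degree $2$ or odd degree, and in the latter case Claim~\ref{odd star}(1) makes $T_x$ a star, so $x$ is adjacent to exactly $d_G(x)-2$ leaves. This is precisely the remaining degree hypothesis of Lemma~\ref{Le:shorter cycle}.

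Next I would suppose $g\geq 4$ and deduce $g=5$. If $G$ were not one of the two graphs $H(K_2,K_1,K_1,K_i,K_1)$, $i=1,2$, excluded from Lemma~\ref{Le:shorter cycle}, then the lemma would produce $G_1$ with $|V(G_1)|=|V(G)|$, the same number of even-degree vertices (so $G_1\in\mathcal{U}_{n,r}$), and $W(G_1)<W(G)$, contradicting minimality. Hence $G\in\{H(K_2,K_1,K_1,K_1,K_1),\,H(K_2,K_1,K_1,K_2,K_1)\}$, both of which have $g=5$ and odd maximum cycle degree $3$. To eliminate the second, I would note $H(K_2,K_1,K_1,K_2,K_1)\in\mathcal{U}_{7,3}$ and compare it against the explicit same-family competitor $H(SB(1;4),K_1,K_1)\in\mathcal{U}_{7,3}$; a direct evaluation gives $W(H(SB(1;4),K_1,K_1))=35<41=W(H(K_2,K_1,K_1,K_2,K_1))$, again contradicting minimality. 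Thus $g\geq 4$ forces $G=H(K_2,K_1,K_1,K_1,K_1)$ with $g=5$.

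Finally, for $g=3$ I would pin down the degrees of $u_2,u_3$. By the argument of the first paragraph the only cycle vertex that can have degree $\geq 4$ is $u_1$, so $d_G(u_2),d_G(u_3)\in\{2,3\}$. A degree-$3$ cycle vertex carries a star (Claim~\ref{odd star}(1)), i.e. a single pendant leaf, so every off-cycle vertex is a leaf of odd degree $1$, and $u_1$ itself has odd degree; therefore the only even-degree vertices of $G$ are those among $u_2,u_3$ that have degree $2$. This yields $r\leq 2$ together with the stated trichotomy: $r=2$ exactly when $d_G(u_2)=d_G(u_3)=2$; $r=1$ exactly when one has degree $2$ and the other degree $3$ (taking $d_G(u_2)=2$, $d_G(u_3)=3$ after relabeling $u_2,u_3$, which are symmetric on the triangle); and $r=0$ exactly when $d_G(u_2)=d_G(u_3)=3$.

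The main obstacle is the elimination of the exceptional graph $H(K_2,K_1,K_1,K_2,K_1)$. Because Lemma~\ref{Le:shorter cycle} is explicitly inapplicable to it, and the natural pendant-moving transformations flip the parities of $d_G(u_1)$ and $d_G(u_4)$ (thereby changing $r$ and leaving the family $\mathcal{U}_{n,r}$), I expect no abstract transplantation argument to work; instead I would settle this single small case by the explicit Wiener-index comparison above.
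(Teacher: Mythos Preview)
Your argument is correct and follows essentially the same route as the paper: verify the hypotheses of Lemma~\ref{Le:shorter cycle} (the paper leaves this implicit), apply it to rule out $g\geq 4$ except for the two listed pentagon graphs, dispose of $H(K_2,K_1,K_1,K_2,K_1)$ by an explicit Wiener-index comparison within $\mathcal{U}_{7,3}$, and finish the $g=3$ case via Claim~\ref{odd star}. The only cosmetic difference is the competitor you chose for the exceptional pentagon: the paper uses $H(K_{1,2},K_2,K_1,K_1)$ (Wiener index $40$) whereas you use $H(SB(1;4),K_1,K_1)$ (Wiener index $35$); both lie in $\mathcal{U}_{7,3}$ and beat $41$, so either works.
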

\begin{proof} If $G = H(K_2,K_1,K_1,K_2,K_1)$, it is easy to see that $W(G)<W(G_1)$ by simple calculation, where $G_1=H(K_{1,2}, K_2, K_1, K_1)$.
Suppose to the contrary $g \geq 4$ and $G \not = H(K_2,K_1,K_1,K_i,K_1)$  for $i=1,2.$  Let $G_1$ be the graph obtained from $G$ by contracting $u_1u_2$,   where  $d_G(u_2)$ is the maximum and   $d_G(u_2)+d_G(u_1)\ge d(x)+d(y)$ for any edge $xy$ with $d_G(x)=d_G(u_2)$.
                   By Lemma~\ref{Le:shorter cycle}, $W(G_1)< W(G)$ and $G_1 \in \mathcal{U}_{n,r}$, a contradiction to the minimality of $W(G)$. Thus $g = 3$.  By Claim~\ref{odd star}, $T_{u_1}$ is a star. Since $d_G(u_1) \geq 3$ is odd, we have $r \leq 2$.
\end{proof}

\medskip
Claim~\ref{triangle} completes the proof of (1) in the theorem.

\subsection{The case when $d(u_1) = \max \{d(u)| u \in V(C)\}  \geq 4$ is even}

\begin{claim}
\label{one big even vertex}
 $d(u_i) = 2$ for each $ i = 2,\dots, g$.
\end{claim}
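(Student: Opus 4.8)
The plan is to establish Claim~\ref{one big even vertex} by showing that $u_1$ being the unique vertex of even degree at least $4$ on the cycle forces every other cycle vertex to have degree exactly $2$. By Claim~\ref{one big vertex}, at most one cycle vertex has degree $\geq 4$, and that is $u_1$; so every $u_i$ with $i \geq 2$ has degree either $2$, or $3$ (odd). The only thing left to rule out is that some $u_i$ ($i\geq 2$) has odd degree $3$ (or higher odd degree, which is already excluded by maximality of $d_G(u_1)$ among cycle vertices together with Claim~\ref{one big vertex} forcing odd degrees to be exactly $3$). So the crux is: no cycle vertex other than $u_1$ can have degree $3$.

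First I would suppose for contradiction that some $u_i$ with $i\geq 2$ has $d_G(u_i) = 3$, so by Claim~\ref{odd star}(1) its tree $T_{u_i}$ is a star, and since $d_G(u_i)=3$ this star is a single pendant path of length $1$ (one leaf $u_i'$ attached to $u_i$). The natural move is a local exchange argument in the spirit of Lemma~\ref{LE:two cut vertices}: treat the pendant edge $u_iu_i'$ as a piece $X$ hanging at $u_i$, and try to relocate it to $u_1$ (where the large even-degree branch sits). Concretely, I would set $X$ to be the leaf $u_i'$ (so $X-u_i = \{u_i'\}$), let $Y = G - T_{u_1}$ reduced appropriately so that $Y$ is the part carrying the heavy branch at $u_1$, and let $H$ be the cycle-based remainder sharing the two cut vertices $u_i$ and $u_1$. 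Then $G_{X\to Y}$ moves the pendant leaf from $u_i$ to $u_1$ and $G_{Y\to X}$ moves the heavy branch from $u_1$ to $u_i$; Lemma~\ref{LE:two cut vertices} guarantees one of these strictly decreases $W$, contradicting minimality — provided both resulting graphs still lie in $\mathcal{U}_{n,r}$.

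The main obstacle is precisely the parity bookkeeping needed to keep the exchanged graph inside $\mathcal{U}_{n,r}$, i.e. to preserve the number $r$ of even-degree vertices. Moving a single pendant edge changes the degrees of the two cut vertices $u_i$ and $u_1$ by $\pm 1$, which flips their parities; since $d_G(u_i)=3$ is odd and $d_G(u_1)$ is even, a naive single-leaf move will not preserve $r$. I therefore expect the correct formulation to move a branch of \emph{even} size (as in the proofs of Claims~\ref{odd star} and \ref{one big vertex}, where one always chooses $X$ with $d_X(\cdot)=2$), so that both cut-vertex degrees shift by an even amount and all parities are preserved. The careful part is arranging $X$, $Y$, $H$ so that (a) $u_i$ and $u_1$ are genuinely the two shared cut vertices, (b) the degree changes at $u_i$ and $u_1$ are each even, keeping $G_{X\to Y}, G_{Y\to X} \in \mathcal{U}_{n,r}$, and (c) the exchange is nondegenerate so Lemma~\ref{LE:two cut vertices} applies.

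Once the exchange is set up legally, the contradiction is immediate from Lemma~\ref{LE:two cut vertices}, and I would then sweep up the remaining case: if $d_G(u_i)\geq 5$ is odd for some $i\geq 2$, this contradicts the maximality assumption $d_G(u_1)=\max\{d_G(u):u\in V(C)\}$ only if $d_G(u_1)\geq 5$, but combined with Claim~\ref{one big vertex} (at most one vertex of degree $\geq 4$) the vertex $u_i$ would itself have degree $\geq 4$, contradicting uniqueness unless $u_i=u_1$. Hence every $u_i$ with $i\geq 2$ has degree $2$ or $3$, and the exchange argument eliminates degree $3$, leaving $d_G(u_i)=2$ for all $i\geq 2$, which is exactly the claim.
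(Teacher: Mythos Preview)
Your overall plan is the same as the paper's: assume some $u_i$ with $i\geq 2$ has degree $\geq 3$, use Claim~\ref{one big vertex} to force $d_G(u_i)=3$, and then apply the two-cut-vertex exchange Lemma~\ref{LE:two cut vertices} between $u_1$ and $u_i$ to contradict minimality. However, your proof stalls at exactly the point where the paper's succeeds, and the reason is a mistaken parity computation.

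You assert that ``a naive single-leaf move will not preserve $r$'' and therefore look for a way to move a branch of even attachment-degree at \emph{both} cut vertices. But check the parities directly with the simplest choice $X=T_{u_1}$, $Y=T_{u_i}$, $H=G-(X-u_1)-(Y-u_i)=C_g$:
\begin{itemize}
\item In $G_{Y\to X}$ (move the single leaf $u_i'$ to $u_1$): $d(u_i)$ goes from $3$ (odd) to $2$ (even), while $d(u_1)$ goes from even to odd. One odd vertex becomes even and one even vertex becomes odd, so the total count $r$ is unchanged.
\item In $G_{X\to Y}$ (move all of $T_{u_1}$ to $u_i$): $d(u_1)$ drops from $d_G(u_1)$ (even) to $2$ (still even), and $d(u_i)$ goes from $3$ to $3+(d_G(u_1)-2)=d_G(u_1)+1$, which is odd since $d_G(u_1)$ is even. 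Both parities are unchanged, so $r$ is unchanged.
\end{itemize}
Hence both $G_{X\to Y}$ and $G_{Y\to X}$ lie in $\mathcal{U}_{n,r}$, Lemma~\ref{LE:two cut vertices} applies, and you get the contradiction immediately. This is exactly the paper's argument. The obstacle you identified does not exist; the ``naive'' exchange already works, and no detour through even-size branches is needed.
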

\begin{proof}
Suppose to the contrary that  $d(u_i) \geq 3$ for some $i \in 2,\dots, g$.  By Claim~\ref{one big vertex}, $d(u_i) = 3$.  Let $u_i'$ be the leaf neighbor of $u_i$. Since $d(u_1) \geq 4$, let $X = T_{u_1}$,  $Y = T_{u_i}$  and $H= G- (X-u_1) - (Y-u_i)$. Then both $G_{X\to Y}$ and $G_{Y\to X}$ belong to $\mathcal{U}_{n,r}$. By Lemma~\ref{LE:two cut vertices},  either $W(G_{X\to Y}) < W(G)$ or $W(G_{Y\to X}) < W(G)$, a contradiction to the minimality of $W(G)$.
\end{proof}

To prove that $G$ must be the configuration (2)  in Theorem~\ref{TH: Conf}, we need to show  the following claim which says that the subdivided star $T_{u_1}$ is almost balanced.

\begin{claim}
\label{almost balanced}
Let $P_1$ and $P_2$ be the two branches of $T_{u_1}$.  Then $||P_1| - |P_2||\leq 1$.
\end{claim}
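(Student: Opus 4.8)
The plan is to argue by contradiction using a single leaf-relocation move, so that everything reduces to a one-variable inequality. Suppose two branches of $T_{u_1}$, say $P_1$ and $P_2$, satisfy $|P_1|\ge |P_2|+2$; write $p=|P_1|$, $q=|P_2|$ and label the branches $u_1=a_0,a_1,\dots,a_p$ and $u_1=b_0,b_1,\dots,b_q$, where $a_p,b_q$ are leaves. Let $G'$ be obtained from $G$ by deleting the leaf $a_p$ and attaching a new leaf $z$ at $b_q$, so that in $G'$ the branch $P_1$ has length $p-1$ and $P_2$ has length $q+1$. First I would verify $G'\in\mathcal{U}_{n,r}$: since $p\ge 3$ and $q\ge 1$ we have $a_{p-1}\ne u_1$ and $b_q\ne u_1$, so the only degree changes are that $a_{p-1}$ (formerly degree $2$) becomes a leaf and $b_q$ (formerly a leaf) becomes degree $2$, while all other degrees — in particular $d(u_1)$ — are unchanged. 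Thus exactly one even-degree vertex is lost and one gained, and $G,G'$ have the same number of even-degree vertices. Everything then hinges on showing $W(G')<W(G)$, contradicting minimality.

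Next I would reduce the comparison to a computation in the common graph $G_0:=G-a_p=G'-z$. Because $a_p$ (resp. $z$) is a leaf attached to $G_0$ at $a_{p-1}$ (resp. at $b_q$), Lemma~\ref{LE:one cut vertex} applied with $G_2=K_2$ gives $W(G)=W(G_0)+n_0+d_{G_0}(a_{p-1},G_0)$ and $W(G')=W(G_0)+n_0+d_{G_0}(b_q,G_0)$, where $n_0=|V(G_0)|$. Hence $W(G')-W(G)=d_{G_0}(b_q,G_0)-d_{G_0}(a_{p-1},G_0)$, and it suffices to prove $d_{G_0}(b_q,G_0)<d_{G_0}(a_{p-1},G_0)$; that is, the endpoint of the (length $q$) branch is closer on average to the rest of $G_0$ than the endpoint of the (length $p-1$) branch.

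To evaluate these two distance sums I would use that $u_1$ is a cut vertex separating each branch from the rest of $G_0$: for a leaf endpoint at distance $m$ from $u_1$ along its branch, every vertex off that branch is reached through $u_1$, while the $m$ branch vertices sit at distances $0,1,\dots,m-1$. Summing and abbreviating $S:=d_{G_0}(u_1,G_0)$, both distance sums take the form $f(m)+S$ with $f(m)=m(n_0-1-m)$, evaluated at $m=p-1$ and $m=q$. Consequently
\[
W(G')-W(G)=f(q)-f(p-1)=-(p-1-q)\bigl(n_0-1-(p-1)-q\bigr).
\]
Here $p-1-q\ge 1$, and since the cycle $C$ contributes $g\ge 3$ vertices of $G_0$ disjoint from the $(p-1)+q$ internal branch vertices of $P_1,P_2$, we have $(p-1)+q\le n_0-3$, so $n_0-1-(p-1)-q\ge 2>0$. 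Thus $W(G')-W(G)\le -2<0$, the desired contradiction, and so $\bigl||P_1|-|P_2|\bigr|\le 1$ for every pair of branches.

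The step I expect to be delicate is this last sign computation: the inequality $d_{G_0}(b_q,G_0)<d_{G_0}(a_{p-1},G_0)$ is \emph{not} automatic from $p-1>q$, since $f$ is a downward parabola symmetric about $(n_0-1)/2$ and would \emph{decrease} if a branch exceeded half the graph. What rescues the argument is precisely that the cycle provides at least three vertices off the two branches, forcing $(p-1)+q<n_0-1$ and hence $f(p-1)>f(q)$. The parity bookkeeping of the first paragraph is the other point needing care, as it is what keeps $G'$ inside $\mathcal{U}_{n,r}$.
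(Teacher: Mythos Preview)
Your argument is correct and follows essentially the same approach as the paper: both proofs perform the identical leaf-relocation (deleting the end of the long branch and reattaching it at the end of the short one), verify membership in $\mathcal{U}_{n,r}$, and then show the Wiener index strictly decreases. The only difference is bookkeeping: the paper compares $W(G)$ and $W(G_1)$ directly by noting that only distances to the moved leaf change and that the contribution from $P_1\cup P_2$ is invariant, whereas you invoke Lemma~\ref{LE:one cut vertex} to reduce to the transmission comparison $d_{G_0}(b_q,G_0)<d_{G_0}(a_{p-1},G_0)$ and then compute the closed form $f(m)=m(n_0-1-m)$; both routes land on the same inequality, and your explicit formula even quantifies the drop.
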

\begin{proof}
Suppose to the contrary $|P_1| - |P_2| \geq 2$. Let  $v_1$ and $v_2$ be the other endvertices of $P_1$ and $P_2$  than $u_1$, respectively.

Denote $P_1' = P_1-v_1$ and $P_2' = P_2 + v_2v_1$.  Let $G_1$ be the graph obtained from $G$ by replacing $P_1$ and $P_2$ with $P_1'$ and $P_2'$.  Then $G_1 \in  \mathcal{U}_{n,r}$.

We have the following facts:

\medskip \noindent
 (a)   $|P_1| = d_G(u_1,v_1) \geq |P_2| + 2 >  d_{G_1}(u_1,v_1) = |P_2| + 1$.

 \medskip \noindent
(b)   for any two vertices $x,y \in V(G) = V(G_1)$, if $v_1\not \in \{x,y\}$, then $d_G(x,y) = d_{G_1}(x,y)$.

\medskip \noindent
 (c) For any $x \in V(G)- V(P_1)-V(P_2)$, $d_G(x,v_1) = d_G(x,u_1) + d_G(u_1v_1) > d_{G_1}(x,u_1)  + d_{G_1}(u_1,v_1)$.

 \medskip \noindent
(d) $\sum_{x\in P_1'\cup P_2'} d_{G_1}(x,v_1) = \sum_{x\in P_1\cup P_2} d_{G}(x,v_1)$ since   $P_1'\cup P_2'$  and $P_1\cup P_2$  are two paths with the same length and have $v_1$ as one endvertex.

 \medskip \noindent
By (a)$-$(d)  we have $W(G_1) < W(G)$, a contradiction to the minimality of $W(G)$. This completes the proof of the theorem.
\end{proof}

\section{Proof of Theorem~\ref{TH: small r}}
 We will complete the proof of Theorem~\ref{TH: small r} in this section
We first prove the following two lemmas.

\begin{lemma}
\label{t=2}
Let $G =H(SaB_u(t,b), K_1,\dots,K_1) \in  \mathcal{U}_{n,r}$   with girth $g$,  $3\le r \leq \frac{n+3}{2}$ and $d_G(u) = n-r + 2$ is even.
Then

(a) $t\leq 2$.

(b)  $u$ is adjacent to a leaf  vertex unless $r = \frac{n+3}{2}$ and $g= 3$ in which case, $ SaB_u(2;b)=SB_u(2,b)$ is  balanced with $t=2$.
\end{lemma}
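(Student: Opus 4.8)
The plan is to reduce both statements to a single exact vertex count together with the girth bound $g \ge 3$; notably, neither part will require minimality of $W(G)$. First I would record the basic parameters. Since the $g-1$ cycle vertices other than $u$ carry trivial trees, we have $d_G(u) = b + 2$, and the hypothesis $d_G(u) = n - r + 2$ gives $b = n - r$. Writing $a$ for the number of branches of length $t-1$ (so $0 \le a \le b-1$, because at least one branch has length $t$), the non-root vertices of $T_u$ number $a(t-1) + (b-a)t = bt - a$, whence
\[
n = g + bt - a, \qquad \text{equivalently} \qquad bt - a = n - g.
\]

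For part (a) I would bound $bt - a$ from both sides. From $a \le b-1$ we get $bt - a \ge b(t-1) + 1$, while $g \ge 3$ gives $bt - a = n - g \le n - 3$; combining yields $b(t-1) \le n - 4$. The constraint $r \le \frac{n+3}{2}$ translates to $b = n - r \ge \frac{n-3}{2}$. If $t \ge 3$ then $t - 1 \ge 2$ and hence $b(t-1) \ge 2\cdot\frac{n-3}{2} = n - 3 > n - 4$, a contradiction. Therefore $t \le 2$.

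For part (b) I would use $t \le 2$ and locate exactly when $u$ has no leaf neighbor. If $t = 1$, every branch has length $1$ (the alternative length $t-1 = 0$ is empty), so $u$ is adjacent to a leaf; if $t = 2$ and some branch has length $t - 1 = 1$, the endpoint of that branch is a leaf adjacent to $u$. Thus $u$ fails to have a leaf neighbor only when $t = 2$ and every branch has length $2$, that is, when $SaB_u(2;b) = SB_u(2;b)$ is balanced. In that case the count simplifies to $n = g + 2b$, and with $b = n - r$ this gives $g = 2r - n$. The girth bound $g \ge 3$ then forces $2r - n \ge 3$, i.e. $r \ge \frac{n+3}{2}$; together with the standing hypothesis $r \le \frac{n+3}{2}$ this pins down $r = \frac{n+3}{2}$ and $g = 3$, which is exactly the stated exceptional configuration.

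The computations are elementary, so the only real care needed is the bookkeeping: tracking the short-branch count $a$ in the almost balanced star and using the exact identity $n = g + bt - a$ consistently. The single inequality $g \ge 3$ is the engine behind both conclusions---it is what forbids $t \ge 3$ in (a) and what forces the exceptional case to have girth exactly $3$ in (b). I would also note in passing the degenerate reading $b = 0$ (where $G$ is a cycle), which is disposed of by the same count and does not affect the main argument.
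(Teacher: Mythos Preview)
Your proof is correct and follows essentially the same approach as the paper's: both arguments rest on the vertex count for $G$ together with the bound $g\ge 3$ and the hypothesis $r\le \frac{n+3}{2}$ (equivalently $b=n-r\ge \frac{n-3}{2}$). Your version is somewhat more explicit---introducing the short-branch count $a$ and writing the exact identity $n=g+bt-a$---while the paper uses the cruder lower bound $n\ge (t-1)(n-r)+1+g$ directly; the two are interchangeable and lead to the same contradiction for $t\ge 3$ and the same equality analysis for part~(b).
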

\begin{proof}
We rearrange the inequality  $r\leq \frac{n+3}{2}$ to get $n-2(n-r)\leq 3$.  Thus   $2(n-r) + 3 \geq n \geq (t-1)(n-r) + 1 + g$.  Since $g\geq 3$, we have  $(3-t)(n-r) \geq 1 > 0$. Therefore $t\leq 2$.

 Note that $3$ is the minimum value for the length of a cycle and that $2n-2r$ represents the number of vertices in $SaB_u(2;b).$ Thus, equality   $n-2(n-r)=3$ is achieved when $SaB_u(2,b)=SB_u(2;b)$ and $g=3.$
\end{proof}

We first define an operation as follows:

 Let $G =H(SaB_u(t;b), K_1,\dots,K_1) \in  \mathcal{U}_{n,r}$ where $3\le r \leq \frac{n+3}{2}$  and $g\geq 4$.   By Lemma~\ref{t=2}, $t\leq 2$ and $u$ has a leaf neighbor.  Let $uv$ be an edge in $C_g$ and $w$ be a leaf neighbor of $u$ in $SaB_u(t;b)$.

 {\bf Operation A:} The operation A  is defined  as  identifying $v$ and $u$ (denoted by $u$) and  adding a leaf vertex adjacent to $w$.  Denote by $G_A$ the graph obtained from $G$ by Operation $A$.

\begin{lemma}
\label{Le:opA}
Let $G =H(SaB_u(t;b), K_1,\dots,K_1) \in  \mathcal{U}_{n,r}$ with $1\le t\le 2, $ $3\le r \leq \frac{n+3}{2}$  and $g\geq 4$. Then
$$W(G) - W(G_A)  =  \left\{
\begin{array}{ll}
= 0  ~~~~~~\mbox{ if $ g = 7$ and $SaB_u(t,b)=SB_u(1,4)$ }; \\
< 0 ~~~~~~~~\mbox{if $g  \in \{4,5\}$};\\
> 0~~~~~~~\mbox{otherwise}.
\end{array}
\right.$$
\end{lemma}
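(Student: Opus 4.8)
The plan is to exploit the cut-vertex structure: in both $G$ and $G_A$ the center $u$ is a cut vertex separating the cycle from the subdivided star, so I would apply Lemma~\ref{LE:one cut vertex} twice. Write $SaB := SaB_u(t;b)$, let $m = n-g$ be the number of its vertices other than $u$, and let $SaB'$ be the star after Operation~A, where a pendant vertex $z$ has been attached to a leaf $w$ adjacent to $u$ (such $w$ exists by Lemma~\ref{t=2}(b), since $g\geq 4$). Decomposing $G$ at $u$ into $C_g$ and $SaB$, and $G_A$ at $u$ into $C_{g-1}$ and $SaB'$, gives
\[
W(G)-W(G_A) = \big(W(C_g)-W(C_{g-1})\big) + \big(W(SaB)-W(SaB')\big) + (\gamma) + (\delta),
\]
where $(\gamma) = (g-1)\,d(u,SaB) - (g-2)\,d(u,SaB')$ and $(\delta) = m\,d(u,C_g) - (m+1)\,d(u,C_{g-1})$ are the two mixed terms produced by Lemma~\ref{LE:one cut vertex}.

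The next step is to evaluate each piece. For the cycle terms I would use Lemma~\ref{Le:cycle} together with $d(u,C_g)=\frac{2W(C_g)}{g}$ (as in the proof of Lemma~\ref{Le:shorter cycle}), splitting into the cases $g=2k$ and $g=2k+1$; this turns $W(C_g)-W(C_{g-1})$ and $(\delta)$ into explicit polynomials in $k$ and $m$. For the star terms, adding the pendant $z$ at $w$ is itself a one-cut-vertex gluing of a $K_2$, so Lemma~\ref{LE:one cut vertex} gives $W(SaB')-W(SaB)=1+m+d(w,SaB)$ and $d(u,SaB')=d(u,SaB)+2$. The key simplification is that $w$ is a leaf adjacent to $u$, so every shortest path out of $w$ leaves through $u$ and hence $d(w,SaB)=d(u,SaB)+m-1$. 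Substituting this collapses the whole star contribution: $\big(W(SaB)-W(SaB')\big)+(\gamma)=4-2n$, independently of $t$, $b$, and the individual branch lengths. Thus $W(G)-W(G_A)$ reduces to a polynomial in $k=\lfloor g/2\rfloor$ and $m$ alone.

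Carrying out the arithmetic (and recalling $n=g+m$) yields, for $g=2k$, the value $\tfrac{k(k-1)}{2}+m(k-2)-3k+4$, and for $g=2k+1$, the value $\tfrac{k(k+1)}{2}+m(k-2)-4k+2$. In particular $g=4$ gives $-1$ and $g=5$ gives $-3$ (the coefficient of $m$ vanishes at $k=2$), which settles the case $g\in\{4,5\}$; while $g=7$ gives exactly $m-4$. For the remaining cases I would feed in the hypotheses: the odd-degree vertices of $G$ are precisely the $b$ leaves of the star, so $n-r=b$; the inequality $r\leq\frac{n+3}{2}$ reads $n\leq 2b+3$; and since $b\leq m$ this forces $m\geq g-3$. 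Plugging this lower bound into the two polynomials shows both are $\geq 0$ for $k\geq 3$, strictly positive once $k\geq 4$, and equal to $0$ at $g=7$ exactly when $m=4$.

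The final step, and the main obstacle, is pinning down the unique equality case. I would argue that $g=7$ and $m=4$, together with $b$ even (forced by $d_G(u)=b+2$ even) and the constraint $n\leq 2b+3$, leave only $b=4,\ t=1$, i.e. $SaB_u(t;b)=SB_u(1;4)$: the alternative $b=2$ (which would give $SB_u(2;2)$, also with $m=4$) has $r=n-b$ too large and is ruled out, so the $=0$ case is exactly the one claimed, and every other admissible configuration with $g\geq 6$ is strictly positive. The delicate point throughout is that the bare polynomials can be nonpositive for small $m$; it is only the hypothesis $r\leq\frac{n+3}{2}$, via $m\geq g-3$, that excludes these, so the sign analysis must invoke this constraint rather than relying on the algebra alone.
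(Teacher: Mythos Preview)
Your proposal is correct and follows essentially the same route as the paper: decompose $G$ and $G_A$ at the cut vertex $u$ via Lemma~\ref{LE:one cut vertex}, use the relation $d(w,SaB)=d(u,SaB)+|SaB|-2$ to collapse the star contribution, and reduce $W(G)-W(G_A)$ to the same polynomial in $k=\lfloor g/2\rfloor$ and $|T|$ (your $m=|T|-1$). Your sign analysis is in fact more explicit than the paper's, which simply says ``with some simple calculation, one can show that the lemma holds''; your use of $r\le\frac{n+3}{2}$ to get $m\ge g-3$ and your elimination of the $b=2$ alternative at $g=7$ are exactly the checks needed and are only implicit in the paper.
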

\begin{proof}
Let $T=SaB_u(t;b)$ and $T_1=SaB_u'(t;b),$ which is obtained by adding a leaf vertex to a path of length $1$ in $SaB_u(2;b).$
By Lemma~\ref{LE:one cut vertex}, we have the following two equalities:
\begin{eqnarray*}
W(G)&=&W(C_g)+W(T)+(g-1)d(u,T)+(|T|-1)d(u,C_g),\\
W(G_A)&=&W(C_{g-1})+W(T_1)+(g-2)d(u,T_1)+|T|d(u,C_{g-1}).
\end{eqnarray*}
Note that $d(u,T_1)=d(u,T)+2$ and $d_G(w,T)=d(u,T)+|T|-2.$ Also, we can use Lemma~\ref{LE:one cut vertex} again to find $W(T_1)$ in terms of $W(T).$ We split $T_1$ such that $V(G_1)=\{v,w\},$ where $w$ is the cut vertex, and $G_2=T.$ Thus, $$W(T_1)=W(G_1)+W(T)+(|T|-1)d(w,G_1)+(2-1)d(w,T)=W(T)+d(w,T)+|T|.$$
Therefore,
\begin{eqnarray*}
W(G)-W(G_A)&=&(W(C_g)+W(C_{g-1}))+(W(T)-W(T_1))\\
&&+~((g-1)d(u,T)-(g-2)d(u,T_1))+((|T|-1)d(u,C_g)-|T|d(u,C_{g-1})\\
&=&(W(C_g)-W(C_{g-1}))-d(w,T)-|T|+d(u_1,T)+4-2g\\
&&+~((|T|-1)d(u,C_g)-|T|d(u,C_{g-1})\\
&=&(W(C_g)-W(C_{g-1}))-2|T|+6-2g+((|T|-1)d(u,C_g)-|T|d(u,C_{g-1}).
\end{eqnarray*}
Now, we have to account for whether $g$ is even or odd. Note that  by Lemma~\ref{Le:cycle} $d(u,C_g)=\frac{2}{g}W(C_g)=k^2$ when $g=2k$ and $d(u,C_g)=k(k-1)$ when $g=2k+1$. The equation becomes
$$W(G)-W(G_A) = \left\{
\begin{array}{ll}
\frac{1}{2}k^{2}-\frac{9k}{2}+(k-2)|T|+6~~~~~~\mbox{if $g = 2k$},\\
\frac{1}{2}k^{2}-\frac{9k}{2}+(k-2)|T|+4~~~~~~\mbox{if $g = 2k+1$}.
\end{array}
\right.
$$
With some simple calculation, one can show that the lemma holds.
\end{proof}

Now we can complete the proof of Theorem~\ref{TH: small r}.

{\bf Proof of Theorem~\ref{TH: small r}}. Let $G \in \mathcal{U}_{n,r}$ where $r \leq \frac{n+3}{2}$ such that $W(G)$ is minimal. By Theorem~\ref{TH: Conf} we have  that $G$ must be one of the following

(a) $G$ is a graph obtained from a triangle $xyz$  by attaching at most one leaf to each of $x$ and $y$ and a star $K_{1,t}$ to $z$ where $t$ is odd and the total number of leaves in $G$ is $n-r$.

(b) $G = H(K_2,K_1,K_1,K_1,K_1)$.

 (c)  $G =H(SaB_v(t;b), K_1,\dots,K_1)$ where $t\leq 2$, $d_G(u) = n-r + 2$ is even, and $b = d_G(u) -2$.

Note that in (a), $ 0\leq r \leq 2$ and the graph is uniquely determined. For (b), it is easy to see that $W(H(K_2,K_1,K_1,K_1,K_1)) = W(H(SB(1;2),K_1,K_1,K_1))= 24$ and $H(SB(1;2),K_1,K_1,K_1)$ is a graph described in (c).

Now we  only need to consider the case (c). By Lemma~\ref{Le:opA}, we have $g\leq 5$ or $g= 7$ and $SaB_v(t;b) = SB_v(1;b)$ is a star with $5$ vertices.

\begin{claim}
$g \leq 5$ and if $g \in \{3,4\}$,   then $SaB_v(t,b)$ must be  a star, i.e., $t = 1$.

\end{claim}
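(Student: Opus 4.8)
The plan is to squeeze $G$ using the two directions of Operation A, as quantified in Lemma~\ref{Le:opA}: whenever $G$ lies outside the asserted range I would produce a member of $\mathcal{U}_{n,r}$ with strictly smaller Wiener index, contradicting minimality. Throughout I use that $G=H(SaB_v(t;b),K_1,\dots,K_1)$ has $b=n-r$ branches, $t\le 2$ by Lemma~\ref{t=2}, $d_G(u)=n-r+2$ even, and (outside the single balanced boundary case of Lemma~\ref{t=2}) a leaf neighbour of $u$, so Operation A is defined. I would first record the key invariance: Operation A keeps both $|V|$ and the number of even-degree vertices fixed, since identifying the degree-$2$ cycle vertex $v$ into $u$ removes one even-degree vertex while the former leaf $w$ becomes even-degree and one new leaf is created. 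Hence every graph produced below stays in $\mathcal{U}_{n,r}$.

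First I would finish the bound $g\le 5$ by disposing of the residual case $g=7$ with $SaB_v(t;b)=SB_v(1;4)$ that Lemma~\ref{Le:opA} leaves open. There $W(G)=W(G_A)$, and $G_A$ is of the form $H(SaB_u(2;4),K_1,\dots,K_1)$ with girth $6$: identifying a cycle vertex into $u$ and lengthening one branch turns the star into an almost balanced subdivided star with one branch of length $2$ and three of length $1$, so $u$ still has a leaf neighbour and Operation A applies to $G_A$. Since $g=6$ falls in the ``otherwise'' branch of Lemma~\ref{Le:opA}, we get $W((G_A)_A)<W(G_A)=W(G)$, a contradiction. Every girth $g\ge 8$, $g=6$, and $g=7$ with $|T|>5$ is eliminated directly by the same ``otherwise'' branch, which gives $W(G)>W(G_A)$. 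Thus $g\le 5$.

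Next, for $g\in\{3,4\}$ I would argue by contradiction, assuming $t=2$, i.e.\ $SaB_v(2;b)$ has a branch of length $2$. The idea is to run Operation A \emph{backwards}: pick a length-$2$ branch, say $u,w,x$ with $x$ the end-leaf, delete $x$ so that $w$ becomes a leaf neighbour of $u$, and split $u$ by inserting a new degree-$2$ vertex onto the cycle adjacent to $u$. This yields a graph $G'\in\mathcal{U}_{n,r}$ of girth $g+1\in\{4,5\}$ and of the form $H(SaB_u(t';b),K_1,\dots,K_1)$ with $t'\le 2$ (shortening one length-$2$ branch of $SaB(2;b)$ keeps all branches of length $t'$ or $t'-1$). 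By construction $(G')_A=G$, and since $g+1\in\{4,5\}$ lies in the ``$<0$'' branch of Lemma~\ref{Le:opA}, we obtain $W(G')<W((G')_A)=W(G)$, contradicting minimality. Therefore $t=1$ and $SaB_v(t;b)$ is a star whenever $g\in\{3,4\}$.

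The main obstacle I expect is bookkeeping rather than a conceptual hurdle: I must verify that the reverse operation lands in the admissible family so that Lemma~\ref{Le:opA} can be invoked on $G'$. Concretely, I would check that $G'$ has girth $\ge 4$ (automatic, as $g+1\ge 4$), that its tree is a genuine almost balanced subdivided star with $t'\le 2$, that $u$ retains a leaf neighbour (the just-shortened branch supplies $w$), and that the degree-$2$ vertex inserted on the cycle makes $(G')_A$ reproduce $G$ exactly. I would also confirm the degenerate constraints are harmless --- namely $b=n-r\ge 2$ so a branch can be shortened, and the balanced boundary case $r=\tfrac{n+3}{2}$, $g=3$ of Lemma~\ref{t=2} does not obstruct the backward step --- but these follow routinely once the vertex-count and even-degree invariances of Operation A are in hand.
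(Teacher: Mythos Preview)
Your approach is essentially identical to the paper's: for the residual $g=7$ case you apply Operation~A twice (once at girth $7$ giving equality, once at girth $6$ giving strict decrease), and for $g\in\{3,4\}$ with $t=2$ you reverse Operation~A to land at girth $g+1\in\{4,5\}$ and invoke the ``$<0$'' branch of Lemma~\ref{Le:opA}. The additional bookkeeping you supply---checking that both $G_A$ and the reversed graph $G'$ remain in $\mathcal{U}_{n,r}$, retain the form $H(SaB_u(t';b),K_1,\dots,K_1)$ with $t'\le 2$, and have a leaf neighbour of $u$ so that Lemma~\ref{Le:opA} applies---is correct and fills in details the paper leaves implicit.
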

If $g = 7$ and $SaB_v(t;b)$ is a star with $5$ vertices, then $W(G_A) = W(G)$ and $W(G_A)_A < W(G_A) = W(G)$, a contradiction.

Assume $g \in \{3,4\}$. If $t = 2$, reverse Operation A on $G$. By Lemma~\ref{Le:opA}, the resulting graph has a smaller wiener index, a contradiction. This completes the proof of Theorem~\ref{TH: small r}. \hfill $\Box$

\end{document}